\numberwithin{equation}{section}
\newtheorem{thm}{Theorem}[section]
\newtheorem{cor}[thm]{Corollary}
\newtheorem{prop}[thm]{Proposition}
\newtheorem{defn}[thm]{Definition}
\newcommand{\Ann}{\mbox{Ann}\,}
\newcommand{\coker}{\mbox{Coker}\,}
\newcommand{\Hom}{\mbox{Hom}\,}
\newcommand{\Ext}{\mbox{Ext}\,}
\newcommand{\Tor}{\mbox{Tor}\,}
\newcommand{\Spec}{\mbox{Spec}\,}
\newcommand{\Ker}{\mbox{Ker}\,}
\newcommand{\Ass}{\mbox{Ass}\,}
\newcommand{\Supp}{\mbox{Supp}\,}
\newcommand{\gr}{\mbox{grade}\,}
\newcommand{\depth}{\mbox{depth}\,}
\renewcommand{\dim}{\mbox{dim}\,}
\newcommand{\Tr}{\mbox{Tr}\,}
\newcommand{\Min}{\mbox{Min}\,}
\newcommand{\gd}{\mbox{G--dim}\,}
\newcommand{\SDE}{\mbox{SDE}\,}
\newcommand{\CME}{\mbox{CME}\,}
\newcommand{\h}{\mbox{ht}\,}
\newcommand{\E}{\mbox{E}}
\renewcommand{\H}{\mbox{H}}
\newcommand{\D}{\mbox{D}}
\newcommand{\fm}{\mathfrak{m}}
\newcommand{\fp}{\mathfrak{p}}
\newcommand{\fc}{\mathfrak{c}}
\begin{document}
\bibliographystyle{amsplain}


\title[Linkage of modules over Cohen-Macaulay rings]
 {Linkage of modules over Cohen-Macaulay rings}

\bibliographystyle{amsplain}

     \author[M. T. Dibaei]{Mohammad T. Dibaei$^{1}$}
     \author[M. Gheibi]{Mohsen Gheibi$^2$}
     \author[S. H. Hassanzadeh]{S. H. Hassanzadeh$^{3}$}
     \author[A. Sadeghi]{Arash Sadeghi$^4$}

\address{$^{1, 2, 3, 4}$ Faculty of Mathematical Sciences and Computer,
Tarbiat Moallem University, Tehran, Iran.}

\address{$^{1, 2, 4}$ School of Mathematics, Institute for Research in Fundamental Sciences
(IPM), P.O. Box: 19395-5746, Tehran, Iran.} \email{dibaeimt@ipm.ir}
\email{mohsen.gheibi@gmail.com} \email{sadeghiarash61@gmail.com}

\address{$^{3}$ Departamento de Mate\'{m}atica,CCEN,
Universidade Federal de Pernambuco,50740-540 Recife, PE
Brazil.} \email{hamid@dmat.ufpe.br}

\keywords{Linkage of modules,  Sliding Depth of Extention modules, modules with Cohen-Macaulay extension,
sequentially Cohen-Macaulay  \\
1. M.T. Dibaei was supported in part by a grant from IPM (No. 89130114)\\
 3. S.H.Hassanzadeh was partially supported by a grant from CNPq (Brazil).}

 \subjclass[2000]{13C40, 13D45,13C14}


\begin{abstract} Inspired by the works in linkage theory of ideals, the concept of sliding depth of extension modules is defined to prove the Cohen-Macaulyness of linked module if the base ring is merely Cohen-Macaulay. Some relations between this new condition and other module-theory conditions such as G-dimension and sequentially Cohen-Macaulay are established. By the way several already known theorems in linkage theory are improved or recovered by new approaches.

\end{abstract}

\maketitle

\bibliographystyle{amsplain}
\section{introduction}
Classification is one of the main perspective in any field of
mathematics. Among rare theories deliberate this viewpoint in
commutative algebra and Algebraic Geometry, linkage theory is a
well-developed theory during decades. Classically it refers to
Halphen (1870) and M. Noether \cite{No}(1882) who worked to classify
space curves. During forties and fifties in twenty century Apery and
Ga\'{e}ta started the new contributions to classify curves in
$\mathbb{P}^3$, the reason one always feels that the twisted cubic
curve is that smooth is behind the fact that it is linked to a line.
In 1974 the significant work of Peskine and Szpiro\cite{PS} brought
breakthrough to this theory and stated it in the modern algebraic
language; two closed subschemes $V_1$ and $V_2$ in $\mathbb{P}^n$
are said to be linked if they are unmixed with no common component
and their union is complete intersection. More precisely two ideals
$I$ and $J$ in Cohen-Macaulay local ring $R$ is said to be linked if
there is a regular sequence $\alpha$ in their intersection such that
$I=\alpha:J$ and $J=\alpha:I$. The first main theorem in the theory
of linkage is the following \cite{PS}.

{\quote\it {Theorem A. If $(R,\fm)$ is a Gorenstein local ring, $I$
 and $J$ are two linked ideals of R then $R/I$ is Cohen-Macaulay if and only if $R/J$ is so.}}

Attempts to generalize this theorem lead to several development in
linkage theory, especially the works by C. Huneke and B. Ulrich \cite{Hu},\cite{HU1}. A
counterexample given by Peskine and Szpiro in the same article shows
that  {\it Theorem A} is no longer true if the base ring $R$ is only
Cohen-Macaulay (from now on CM). Trying to determine the accurate
condition for an ideal $I$, in a CM local ring, such that any ideal
which is linked to $I$ be CM, Huneke \cite{Hu} introduced the
concept of Strongly Cohen-Macaulay (SCM) condition, in the sense
that an ideal $I$ is SCM if all of the Koszul homology modules with
respect to some generating set of $I$ are Cohen-Macaulay. He stated
that {\it in a Cohen-Macaulay local ring any ideal linked to a
strongly Cohen-Macaulay  ideal is Cohen-Macaulay.} Herzog,
Vasconcelos and Villarreal \cite{HVV} replaced the  SCM condition by
the so called Sliding Depth condition; namely we say that an ideal
$I$ satisfies sliding depth condition if $\depth H_i \geq
\dim(R)-r+i$, for $i > 0$, where $H_i$ is $i$th homology of Koszul
with respect to some generating set of $I$ and $r$ is the number of
elements of this generating set.

The new progress in the linkage theory is the recent work  of  Martsinkovsky and Strooker
 \cite{MS} which established the concept of linkage of modules.
 This paper began to attract some interest, they not only could recover some
 of the known theorems in the theory of linked ideals such as the ones in \cite{Sc} but also present
  new conceptual ideas that only exist in module theory.

 In this paper, inspired by the works in the ideal case, we  extend the strongly Cohen-Macaulay and sliding depth condition
  for modules; so that we can state Theorem A for linked modules in
  CM local rings.

In section 2, as mentioned above, we define the new sliding depth
conditions for modules so called SDE (Sliding Depth on Ext´s) or CME
(Cohen-Macaulay Ext´s). Some sufficient condition for being SDE or
CME is given, for example in Proposition \ref{B0} it is shown that
Cohen-Macaulay $R$--modules with finite G--dimension are CME. As
well it is proven that over Cohen-Macaulay local ring $R$,  if $M$
is SDE, then $\lambda M$ is maximal Cohen-Macaulay(see Corollary
\ref{B1}).

Trying to detect the module theory invariants that have no ideal inscription in the theory of linkage of ideals,  we encounter to the combinatorial conception \emph{sequentially Cohen-Macaulay}. We first present a computational criterion for this concept involving the ideas from linkage of module in Corollary \ref{CC}. Finally in this section we pose an extension to a theorem of Foxby \cite{F} for the class of CME modules and answer this question; so that
in Cohen-Macaulay local ring
with canonical module $\omega_R$, it is shown that $M$ is CME if and
only if $M\otimes_R\omega_R$ is sequentially Cohen-Macaulay and
$\Tor_i^R(M,\omega_R)=0$ for $i>0$ ( Theorem \ref{A3}).

In section 3, for a finite $R$--module $M$ over Cohen-Macaulay local
ring $R$ of dimension $d\geq 2$ with canonical module $\omega_R$,
 we establish a duality between local cohomology modules of $M\otimes_R\omega_R$
  and those of $\lambda M$ (Theorem \ref{A4}) provided $M\otimes_R\omega_R$ be generalized
  Cohen-Macaulay. This theorem is a generalization to \cite[Theorem 10]{MS} and
  also \cite{Sc} while for its proof instead of techniques in derived category we appeal to Spectral sequences.
  Also whenever $M$ is generalized Cohen-Macaulay, under some vanishing assumption on Tor-modules of $M$ and $\omega_R$ we  show
that $\H^i_\fm(\lambda M)\cong \Ext^i_R(M,R)$ for $i=1,\ldots ,d-1$,
(Corollary \ref{B2}).

\newpage

\section{SDE and CME modules}
Throughout, $R$ is a Noetherian ring and $M$ is a finite generated
$R$--module.  Assume that $M$ is a stable $R$--module (i.e. $M$ has
no projective summand). Let
$P_1\overset{f}{\rightarrow}P_0\rightarrow M\rightarrow 0$ be a
finite projective presentation of $M$. The transpose $\Tr M$ of $M$
is defined to be $\coker f^*$ where $(-)^* := \Hom_R(-,R),$ which is
unique up to projective equivalence. Thus the minimal projective
presentations of $M$ represent isomorphic transposes of $M$ and it
is also stable $R$--module (see \cite[Theorem 32.13]{AF}). Let
$P\overset{\alpha}{\rightarrow}M$ be an epimorphism such that $P$ is
a projective. The syzygy module of $M$, denoted by $\Omega M$, is
the kernel of $\alpha$ which is unique up to projective equivalence.
Thus $\Omega M$ is determined uniquely up to isomorphism if
$P\rightarrow M$ is a projective cover. The operator $\lambda =
\Omega\Tr$, introduced by Martsinkovsky and Strooker, enabled them
to define linkage for modules: Two finitely generated $R$--modules $M$ and $N$ are said to
be\emph{ horizontally linked} if $M\cong \lambda N$ and
$N\cong\lambda M$. Thus, $M$ is horizontally linked (to $\lambda M$)
if and only if $M\cong\lambda^2M$. It is
shown
in \cite[Proposition 8 in section 4]{MS} that, over a Gorenstein local ring $R$, a stable $R$--module
$M$ with $\dim M= \dim R$ is maximal Cohen-Macaulay if and only if
$\lambda M$ is maximal Cohen-Macaulay and $M$ is unmixed.
 If the ring $R$ is merely a Cohen-Macaulay local ring this statements is not true \cite[section 6]{MS}.

The following definition is the module theory version of strongly Cohen-Macaulay and Sliding depth conditions.
\begin{defn}\label{D1}
 \emph{Let $R$ be a local ring of dimension $d$ and let $M$ be a finitely generated $R$--module.
 The module $M$ is called to be SDE (having Sliding Depth of Extension modules) if either $\Ext^i_R(M,R)=0$ or
 $\depth_R(\Ext^i_R(M,R))\geq d-i$ for all $i=1,\ldots ,d-1$. Also $M$ is called to be CME (having Cohen-Macaulay
 Extension modules) if either $\Ext^i_R(M,R)=0$ or $\Ext^i_R(M,R)$ is
Cohen-Macaulay of dimension $d-i$ for all $i=1,\ldots ,d-1$.}
\end{defn}
To see the ambiguity of CME-modules, it is shown in the next proposition that any Cohen-Macaulay module with finite
G-dimension is a CME. Clearly any CME module is  SDE.
 For the definition of G-dimension we refer to \cite{C}.

\begin{prop}\label{B0}
 Let $R$ be a Cohen-Macaulay local ring of dimension $d$. Then any  Cohen-Macaulay
$R$--module with finite G--dimension is
\emph{CME}.
\end{prop}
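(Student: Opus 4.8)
The plan is to reduce the CME condition to a single clean statement and then prove that statement by induction on the grade. First I would record the numerics. Writing $t=\dim M$ and $g=\gd M$, the Auslander--Bridger formula together with the Cohen--Macaulayness of $M$ and $R$ gives $g=\depth R-\depth M=d-t$; on the other hand, since $R$ is Cohen--Macaulay, $\gr M=\h(\Ann M)=d-\dim M=d-t$. Thus $g=\gr M$, i.e. $M$ is ``G-perfect''. Because $\gr M=\inf\{i:\Ext^i_R(M,R)\neq 0\}$, the modules $\Ext^i_R(M,R)$ already vanish for $i<g$, so the whole proposition follows once I establish the following claim: $\Ext^i_R(M,R)=0$ for every $i\neq g$, and $\Ext^g_R(M,R)$ is Cohen--Macaulay of dimension $d-g$. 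Granting the claim, the CME condition is immediate: in the range $i=1,\dots,d-1$ every $\Ext^i_R(M,R)$ is zero unless $i=g$, in which case it is Cohen--Macaulay of dimension $d-g=d-i$.

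I would prove the claim by induction on $g$. The base case $g=0$ means $M$ is totally reflexive, hence maximal Cohen--Macaulay, and by definition of G-dimension zero $\Ext^i_R(M,R)=0$ for all $i>0$ while $\Ext^0_R(M,R)=\Hom_R(M,R)$ is maximal Cohen--Macaulay of dimension $d$. For the inductive step, since $\gr M=g\geq 1$ there is an $R$-regular element $x\in\Ann M$. Passing to $\overline{R}=R/xR$, a Cohen--Macaulay local ring of dimension $d-1$, the module $M$ becomes an $\overline{R}$-module (as $xM=0$) which is again Cohen--Macaulay, of the same dimension $t$, and of finite G-dimension over $\overline{R}$ with $\gd_{\overline{R}}M=g-1$ by the change-of-rings behaviour of G-dimension. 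The induction hypothesis applied over $\overline{R}$ yields $\Ext^i_{\overline{R}}(M,\overline{R})=0$ for $i\neq g-1$ and $\Ext^{g-1}_{\overline{R}}(M,\overline{R})$ Cohen--Macaulay of dimension $(d-1)-(g-1)=d-g$. Finally, because $x$ is a nonzerodivisor on $R$ and annihilates $M$, the Rees change-of-rings isomorphism supplies $\Ext^i_R(M,R)\cong\Ext^{i-1}_{\overline{R}}(M,\overline{R})$ for all $i$, which transports the vanishing and the Cohen--Macaulayness to $\Ext^\bullet_R(M,R)$ and closes the induction (the dimension and depth of the $\overline{R}$-module $\Ext^g_R(M,R)$ are unchanged when it is viewed over $R$).

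The routine numerical identities (Auslander--Bridger, $\gr M=\h(\Ann M)$ over a Cohen--Macaulay ring, and the grade as the first nonvanishing $\Ext$) I would simply cite. The substantive point, and the only place where finiteness of G-dimension is genuinely used, is the \emph{upper} vanishing $\Ext^i_R(M,R)=0$ for $i>g$: the grade argument alone gives only the lower vanishing, and it is precisely the Gorenstein-like behaviour of finite G-dimension modules with respect to $\Hom_R(-,R)$ that forces the higher $\Ext$ to die. I therefore expect the main obstacle to be the careful justification of the two change-of-rings tools for G-dimension, namely that an $R$-regular $x$ with $xM=0$ drops $\gd$ by exactly one, and the companion Rees isomorphism for $\Ext$; both are standard and may be quoted from \cite{C}, but they carry the entire weight of the argument.
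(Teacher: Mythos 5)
Your proof is correct, and its engine is the same as the paper's: identify $g:=\gd_R(M)=\gr_R(M)$ via the Auslander--Bridger formula, then compute $\Ext_R^{\bullet}(M,R)$ by factoring out a regular sequence inside $\Ann_R(M)$ and transporting along Rees's change-of-rings isomorphism. The difference is organizational, but it changes which external results carry the weight. The paper does the reduction in one shot: from $g=\gd_R(M)$ it immediately gets $\Ext^i_R(M,R)=0$ for all $i\neq g$ (finite G-dimension bounds $\Ext$-nonvanishing from above, grade from below), then kills a maximal $R$-sequence $\underline{x}$ of length $g$ contained in $\Ann_R(M)$, so that Rees's theorem gives $\Ext^g_R(M,R)\cong\Hom_{R/(\underline{x})}(M,R/(\underline{x}))$ together with $\Ext^i_{R/(\underline{x})}(M,R/(\underline{x}))=0$ for $i>0$, and finally quotes \cite[Proposition 3.3.3]{BH} to see that this dual of the maximal Cohen--Macaulay $R/(\underline{x})$-module $M$ is maximal Cohen--Macaulay. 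Because the $\Ext$-vanishing over the quotient ring comes for free from Rees, the paper never needs to control the G-dimension of $M$ over $R/(\underline{x})$; your one-element-at-a-time induction, by contrast, genuinely needs the change-of-rings theorem $\gd_{R/xR}(M)=\gd_R(M)-1$ (in particular, descent of finiteness of G-dimension to $R/xR$), which is the heaviest tool in either argument and, as you yourself observe, carries all the weight --- so the paper's route is lighter on prerequisites. What your route buys is a self-contained treatment of the top $\Ext$: at $g=0$, total reflexivity plus Auslander--Bridger replaces the Bruns--Herzog citation. One small gloss there: $\Hom_R(M,R)$ being maximal Cohen--Macaulay is not literally ``by definition'' of G-dimension zero; you need that $M^*$ is again totally reflexive (which the definition of the G-class does supply) and then the Auslander--Bridger formula to conclude $\depth_R(M^*)=d$.
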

\begin{proof} Let $M$ be Cohen-Macaulay
$R$--module with finite G--dimension. The Auslander-Bridger formula
$\gd_R(M)+ \depth_R(M)=\depth R$ \cite[Theorem 1.4.8]{C} in
conjunction with the  Cohen-Macaulayness of $M$, imply that
$\gr_R(M)=\gd_R(M)=:g$. So that $\Ext^i_R(M, R)= 0$ for all $i\neq
g$. Choose $\underline{x}:=x_1, \ldots , x_g$ to be a maximal
$R$--sequence contained in $\Ann_R(M)$. We have
$\Ext^{g}_R(M,R)\cong \Hom_{R/(\underline{x})}(M,R/(\underline{x}))$
and $\Ext^i_{R/(\underline{x})}(M,R/(\underline{x}))=0$ for all $i>
0$. Since $M$ is a maximal Cohen-Macaulay
$R/(\underline{x})$--module, by \cite[Proposition 3.3.3]{BH},
$\Hom_{R/(\underline{x})}(M,R/(\underline{x}))$ is maximal
Cohen-Macaulay $R/(\underline{x})$--module. Therefore
$\Ext^{g}_R(M,R)$ is Cohen-Macaulay of dimension $d-g$.
\end{proof}

Determining the depth of linked ideals is in the center of the questions on the arithmetic properties of ideals. About linkage of modules the depth of modules linked to SDE modules is rather under control.

\begin{prop}\label{A1}
 Let $(R,\fm)$ be a local ring and let $M$ be a \emph{SDE} $R$--module. Then $\emph\depth_R(\lambda M)
\geq\min\{\emph\depth_R(M),\emph\depth R\}$.
\end{prop}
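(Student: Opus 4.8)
The plan is to read the depth of $\lambda M$ off the complex obtained by dualizing a minimal free resolution of $M$, and then to feed the SDE hypothesis into an iterated application of the depth lemma. Write $d=\dim R$ and $t=\depth R$. If $t=0$ the inequality is vacuous, so assume $t\geq 1$. Dualize a minimal free resolution $\cdots\to P_2\to P_1\to P_0\to M\to 0$ to obtain a complex $P^\bullet\colon 0\to P_0^*\xrightarrow{\partial^0}P_1^*\xrightarrow{\partial^1}P_2^*\to\cdots$ with $\H^i(P^\bullet)=\Ext^i_R(M,R)$. By minimality $\lambda M=\Omega\Tr M$ is exactly the first boundary module $B^1:=\Im\partial^0=\Ker(P_1^*\to\Tr M)$. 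Setting $Z^i:=\Ker\partial^i$ and $B^i:=\Im\partial^{i-1}$, one has for every $i\geq 1$ the short exact sequences
\[0\to B^i\to Z^i\to \Ext^i_R(M,R)\to 0\quad\text{and}\quad 0\to Z^i\to P_i^*\to B^{i+1}\to 0.\]

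First I would run the depth lemma through these two families simultaneously. Since each $P_i^*$ is free, $\depth P_i^*=t$, so the second sequence gives $\depth Z^i\geq\min\{t,\depth B^{i+1}+1\}$ and the first gives $\depth B^i\geq\min\{\depth Z^i,\depth \Ext^i_R(M,R)+1\}$. Substituting repeatedly and discarding the contributions $t+1,t+2,\dots$ coming from the free modules (all dominated by a single $t$), one gets for every $N$ the estimate
\[\depth\lambda M=\depth B^1\ \geq\ \min\Big\{\,t,\ \min_{1\leq i\leq N}\big(\depth \Ext^i_R(M,R)+i\big),\ \depth B^{N+1}+N\,\Big\}.\]
Because $B^{N+1}\subseteq P_{N+1}^*$ is torsionless and $t\geq 1$, we have $\depth B^{N+1}\geq 1$, so the last term is $\geq N+1$; choosing $N\geq t$ makes it irrelevant, whence
\[\depth\lambda M\ \geq\ \min\Big\{\,t,\ \inf_{i\geq 1}\big(\depth \Ext^i_R(M,R)+i\big)\,\Big\}.\]

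It remains to control the second quantity, and this is exactly where the SDE condition enters. For $1\leq i\leq d-1$, either $\Ext^i_R(M,R)=0$, which contributes nothing, or $\depth \Ext^i_R(M,R)\geq d-i$, so that $\depth \Ext^i_R(M,R)+i\geq d\geq t$. For $i\geq d$ one has trivially $\depth \Ext^i_R(M,R)+i\geq i\geq d\geq t$. Hence the infimum is at least $t$, and the displayed estimate yields $\depth\lambda M\geq t=\depth R\geq\min\{\depth_R M,\depth R\}$, which is in fact sharper than the stated bound.

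The substantive points are organizational rather than conceptual: assembling the simultaneous substitution into the clean displayed inequality, and taming the tail $B^{N+1}$ when $M$ has infinite projective dimension — both resolved by the torsionlessness of the boundary modules together with the observation that, precisely in the range $1\leq i\leq d-1$ where SDE has content, the quantity $\depth \Ext^i_R(M,R)+i$ is automatically at least $d$. The one identification to get right is that $\lambda M$ is the first boundary $B^1$ of the dualized resolution; once that is in place the rest is a direct depth chase. I would also remark that, specialized to a Cohen--Macaulay base where $t=d$, this same computation shows $\lambda M$ is maximal Cohen--Macaulay, recovering Corollary \ref{B1}.
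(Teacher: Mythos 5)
Your proof is correct, and it takes a genuinely different route from the paper's. The paper proceeds by induction on $t=\min\{\depth_R(M),\depth R\}$: prime avoidance produces an element $x\in\fm$ that is regular simultaneously on $R$, on $M$, and on every nonzero $\Ext^i_R(M,R)$, $1\le i\le d-1$; one then checks that $M/xM$ is again SDE over $R/xR$ and that $\lambda M/x\lambda M\cong\lambda_{R/xR}(M/xM)$ (via the dualized minimal presentation), and the induction hypothesis finishes the argument. You instead never leave $R$ and read everything off the dualized minimal free resolution: the identification $\lambda M=\Im(P_0^*\to P_1^*)$ is legitimate because minimality makes $P_1^*\to\Tr M$ a projective cover; the two families of short exact sequences are the right decomposition; and the iterated depth lemma, together with the observation that the tail $B^{N+1}$ has positive depth (being a submodule of a free module when $\depth R\ge 1$), closes the chase. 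Your approach buys two things. First, it is non-inductive and isolates a reusable estimate, $\depth_R(\lambda M)\ge \min\{\depth R,\ \inf_{i\ge 1}(\depth_R(\Ext^i_R(M,R))+i)\}$, valid for any finitely generated module. Second, under SDE it yields the stronger conclusion $\depth_R(\lambda M)\ge\depth R$, with no dependence on $\depth_R(M)$; in particular it gives Corollary \ref{B1} assuming only that $M$ is SDE, which is consistent with (and a direct proof of) the remark preceding Theorem \ref{A2} that SDE alone forces $\lambda M$ to be maximal Cohen-Macaulay over a Cohen-Macaulay base. What the paper's argument buys instead is the generic-hyperplane-section technique: the facts that the SDE condition descends modulo a general element and that the linkage operator commutes with that reduction are of independent use (the same reduction reappears implicitly elsewhere in this circle of ideas), whereas your argument, being a pure depth count, does not produce such transfer statements.
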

\begin{proof} Set $t=\min\{\depth_R(M),\depth_R(R)\}$. For $t=0$ it is trivial. Suppose that $t>0$.
Set $X:=\displaystyle\cup^{d-1}_{i=1}\Ass_R(\Ext^i_R(M,R))\cup
\Ass_R(M)\cup \Ass_R(R)$.  As $M$ is SDE and $t>0$, there is
$x\in\fm\setminus\underset{\fp\in X}{\cup}$$\fp$. Set
$\overline{M}=M/xM$ and $\overline{R}=R/xR$. The exact sequence
$0\rightarrow R\overset{x}{\longrightarrow}$$R\longrightarrow
\overline R\longrightarrow 0$
implies the exact sequence\\
\centerline{$0\longrightarrow M^* \overset{x}{\longrightarrow}$$ M^*
\longrightarrow\Hom_R(M,\overline{R}) \longrightarrow
\Ext^1_R(M,R)\overset{x}{\longrightarrow}$$\Ext^1_R(M,R)\longrightarrow
\cdots.$} As each map $\Ext^i_R(M,R)\overset{x}{\longrightarrow}$
$\Ext^i_R(M,R)$ is an injection for all $i=0, \cdots, d-1$, we have
standard isomorphisms $\Ext^i_{\overline{R}}(
\overline{M},\overline{R})\cong \Ext^i_R(M,\overline{R})\cong
\Ext^i_R(M,R)/x\Ext^{i}_R(M,R)$ for all $i=0,1,\ldots ,d-2$.
Therefore $\overline{M}$ is $\SDE$ as $\overline{R}$--module. Let
$P_1\longrightarrow P_0\longrightarrow M\longrightarrow 0$
 be a minimal projective presentation of $M$ and consider the exact sequence $0\longrightarrow M^*\longrightarrow
 P^*_0\longrightarrow \lambda M\longrightarrow 0$. As $\lambda M$ is a syzygy module, $x$ is also a non-zero-divisor on
 $\lambda M$. Thus there is a commutative diagram
with exact rows
$$\begin{CD}
&&&&&&&&\\
\ \ &&&& 0 @>>> M^*/{xM^*} @>>>P^*_0/{xP^*_0} @>>> {\lambda M}/{x\lambda M} @>>>0&  \\
&&&&&&  @VV{\cong}V @VV{\cong}V \\
\ \  &&&& 0 @>>>\Hom_{\overline R}(\overline M,\overline R) @>>> \Hom_{\overline R}(\overline P_0,\overline R)
 @>>>\lambda_{\overline{R}}\overline{M} @>>>0&\\
\end{CD}$$\\
which implies that $\lambda M/{x\lambda M} \cong \lambda_{\overline{R}}\overline{M}$.\\
By induction
$\depth_{\overline{R}}(\lambda_{\overline{R}}\overline{M})\geq
\min\{\depth_{ \overline{R}}(
\overline{M}),\depth_{\overline{R}}(\overline{R})\}=t-1$. Thus
$\depth_R(\lambda M)\geq t$.
\end{proof}

As a corollary of the above general proposition, we have the following generalization of\emph{ Theorem A} which is in fact the module theory version of \cite[Proposition1.1]{Hu}.

\begin{cor}\label{B1}
 Let $R$ be a Cohen-Macaulay local ring of dimension, and let $M$ be
a  maximal Cohen-Macaulay and $\emph{SDE}$ $R$--module.
 Then $\lambda M$ is maximal Cohen-Macaulay.
\end{cor}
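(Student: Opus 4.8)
The plan is to read this off directly from Proposition \ref{A1} by pinning down the two depth quantities appearing in its conclusion. Write $d$ for the dimension of $R$. Since $R$ is Cohen-Macaulay and local, $\depth R = \dim R = d$; and since $M$ is maximal Cohen-Macaulay, $\depth_R(M) = d$ as well. Hence $\min\{\depth_R(M), \depth R\} = d$.

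Because $M$ is SDE, Proposition \ref{A1} now applies verbatim and gives the lower bound $\depth_R(\lambda M) \geq \min\{\depth_R(M), \depth R\} = d$. This is the only nontrivial ingredient, and it is already available from the preceding proposition; here I would simply cite it as a black box rather than re-running its inductive cutting-down argument through a generic non-zero-divisor $x \in \fm$.

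To finish I would supply the reverse inequality from the general principle that, for a nonzero finitely generated module, depth never exceeds dimension, together with $\dim_R(\lambda M) \leq \dim R = d$. Chaining these gives $d \leq \depth_R(\lambda M) \leq \dim_R(\lambda M) \leq d$, so equality holds throughout and $\depth_R(\lambda M) = \dim R$; that is, $\lambda M$ is maximal Cohen-Macaulay. The one point that warrants a line of care is that this depth-dimension comparison requires $\lambda M \neq 0$; this is guaranteed by the standing hypothesis that $M$ is stable, since $\lambda M = \Omega \Tr M$ vanishes only when $\Tr M$ is projective, and a stable projective module over a local ring is zero, which would force $M$ itself to be free and contradict stability. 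With that caveat dispatched, there is no genuine obstacle remaining: the entire substance of the corollary sits inside Proposition \ref{A1}, and what is left is a short sandwiching of depth between two copies of $d$.
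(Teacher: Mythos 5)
Your proof is correct and is essentially the paper's own argument: the paper offers no separate proof of Corollary \ref{B1}, presenting it as an immediate consequence of Proposition \ref{A1}, which is exactly how you proceed. Your only additions---identifying $\min\{\depth_R(M),\depth R\}=d$, sandwiching $\depth_R(\lambda M)$ between $d$ and $\dim_R(\lambda M)\leq d$, and checking $\lambda M\neq 0$ via stability---are just the routine details the paper leaves implicit, and they are all sound.
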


The composed
functors $\mathcal{T}_i:=\Tr \Omega ^{i-1}$ for $i>0$ have been already
introduced by Auslander and Bridger \cite{AB} and recently used by
 Nishida \cite{N} to relate linkage and duality.  In the following
result, over Cohen-Macaulay
 local ring, we characterize an SDE module $M$ in terms of depths of the $R$-modules $\mathcal{T}_iM$ and
 $\lambda\Omega^iM$. Moreover, it follows that for $\lambda M$ to be
  maximal Cohen-Macaulay we only need $M$ to be SDE.

\begin{thm}\label{A2}
 Let $R$ be a Cohen-Macaulay local ring of dimension $d\geq2$, $M$ a finitely generated $R$--module.
 The following statements are equivalent.
\begin{itemize}
           \item[(i)]{$M$ is $\emph{SDE}$.}
            \item[(ii)]{$\emph{depth}_R(\mathcal{T}_iM)\geq d-i$ for all $i=1,\ldots ,d-1$.}
             \item[(iii)]{$\emph{depth}_R(\lambda\Omega^iM)\geq d-i$ for all $i=0,\ldots ,d-2$.}
          \end{itemize}
\end{thm}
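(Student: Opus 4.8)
The plan is to extract from the Auslander--Bridger machinery a single four--term sequence tying each $\mathcal{T}_iM$ to two consecutive Ext modules, to dispose of (ii)$\Leftrightarrow$(iii) by a one--line syzygy computation, and then to prove the essential equivalence (i)$\Leftrightarrow$(ii) by induction on $d$ using the genericity technique already employed in Proposition \ref{A1}. First I would record the key sequence: applying the Auslander--Bridger sequence $0\to\Ext^1_R(\Tr X,R)\to X\to X^{**}\to\Ext^2_R(\Tr X,R)\to 0$ to $X=\mathcal{T}_iM=\Tr(\Omega^{i-1}M)$, using $\Tr\Tr(\Omega^{i-1}M)\cong\Omega^{i-1}M$ (this module is stable) and the syzygy shift $\Ext^j_R(\Omega^{i-1}M,R)\cong\Ext^{j+i-1}_R(M,R)$ for $j\geq 1$, yields for every $i\geq 1$ an exact sequence
\begin{equation*}
0\longrightarrow\Ext^i_R(M,R)\longrightarrow\mathcal{T}_iM\longrightarrow(\mathcal{T}_iM)^{**}\longrightarrow\Ext^{i+1}_R(M,R)\longrightarrow 0.\tag{$\ast$}
\end{equation*}
Since $(\mathcal{T}_iM)^{**}=\Hom_R(\Hom_R(\mathcal{T}_iM,R),R)$ is a second syzygy over the Cohen--Macaulay ring $R$, one has $\depth_R(\mathcal{T}_iM)^{**}\geq\min\{2,d\}=2$ and $\Ass_R(\mathcal{T}_iM)^{**}\subseteq\Ass_R R$.

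For (ii)$\Leftrightarrow$(iii) I would use the identity $\lambda\Omega^iM=\Omega\mathcal{T}_{i+1}M$, so that (iii) reads $\depth_R\Omega\mathcal{T}_jM\geq d-j+1$ for $j=1,\ldots,d-1$. Feeding the syzygy sequence $0\to\Omega\mathcal{T}_jM\to F\to\mathcal{T}_jM\to 0$, with $F$ free, into the depth lemma gives $\depth_R\Omega\mathcal{T}_jM=\depth_R\mathcal{T}_jM+1$ whenever $\depth_R\mathcal{T}_jM<d$ (and both equal $d$ otherwise). As $j\geq 1$, this is exactly the asserted shift of indices, and the equivalence is immediate in both directions.

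The heart is (i)$\Leftrightarrow$(ii), which I would prove by induction on $d$. For $d=2$ only $i=1$ occurs, and ($\ast$) settles both implications at once: the required bound is merely $\depth\geq 1$, and the middle term $(\mathcal{T}_1M)^{**}$ of depth $\geq 2$ propagates it. For the inductive step I would choose, as in Proposition \ref{A1}, an element $x\in\fm$ avoiding $\Ass_R R\cup\Ass_R M\cup\bigcup_{i=1}^{d-1}\Ass_R\Ext^i_R(M,R)$; reading ($\ast$) shows that such an $x$ is automatically a non--zero--divisor on every $\mathcal{T}_iM$, because $\Ass_R\mathcal{T}_iM\subseteq\Ass_R\Ext^i_R(M,R)\cup\Ass_R R$. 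Passing to $\overline R=R/xR$, the isomorphisms $\Ext^i_{\overline R}(\overline M,\overline R)\cong\Ext^i_R(M,R)/x\Ext^i_R(M,R)$ already used in Proposition \ref{A1}, together with the parallel facts $\mathcal{T}_i^{\overline R}\overline M\cong\mathcal{T}_iM/x\mathcal{T}_iM$, show that both the SDE condition and the depth conditions descend to $\overline R$, every relevant depth dropping by exactly one and every index range shrinking by one from the top. The inductive hypothesis then delivers the equivalence for $i=1,\ldots,d-2$, while the top index $i=d-1$, where only $\depth\geq 1$ is needed, is again handled directly from ($\ast$).

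The main obstacle is that ($\ast$) by itself cannot prove (i)$\Leftrightarrow$(ii): because $(\mathcal{T}_iM)^{**}$ carries only depth $2$, a direct diagram chase recovers no more than $\depth_R\mathcal{T}_iM\geq\min\{2,d-i\}$, which is sharp solely for the top two indices. This is precisely what forces the dimensional induction, since removing one regular element lowers both $d$ and the target depth $d-i$ in lockstep, so the high--depth/small--$i$ cases are reached only after several reductions. The delicate point to pin down is the commutation $\mathcal{T}_i^{\overline R}\overline M\cong\mathcal{T}_iM/x\mathcal{T}_iM$, which rests on $\Tr$ and $\Omega$ commuting with reduction modulo $x$; this is transparent when $x$ is regular on $M$, and I would treat the residual case $\depth_R M=0$ separately by first factoring out the finite--length submodule $\G_\fm(M)$, which changes $\Ext^i_R(-,R)$ only for $i\geq d$ and hence leaves untouched the range $1\leq i\leq d-2$ governed by the induction.
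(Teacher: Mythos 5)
Your disposal of (ii)$\Leftrightarrow$(iii) and your inductive step for (i)$\Leftrightarrow$(ii) in the case $\depth_R M>0$ are sound, but your overall route differs genuinely from the paper's: you induct on $d$ by cutting with a generic element, while the paper never changes the ring. The paper works with the Martsinkovsky--Strooker sequence $0\to\Ext^i_R(M,R)\to\mathcal{T}_iM\to\lambda^2\mathcal{T}_iM\to 0$, where $\lambda^2\mathcal{T}_iM$ coincides with the image of $\mathcal{T}_iM\to(\mathcal{T}_iM)^{**}$ in your sequence $(\ast)$ and is stably isomorphic to $\Omega\mathcal{T}_{i+1}M$; this ties consecutive indices together and allows a descending induction on $i$ starting at $i=d-1$, using only the depth lemma and the syzygy depth shift. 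That identification supplies exactly the recursion you correctly observe is missing from $(\ast)$, and it needs no regular element and no hypothesis whatsoever on $\depth_R M$.

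The genuine gap in your proposal is the residual case $\depth_R M=0$, and the patch you offer does not close it. Factoring out $\Gamma_\fm(M)$ does preserve condition (i), since $\Ext^i_R(M,R)\cong\Ext^i_R(M/\Gamma_\fm(M),R)$ for all $i\le d-1$; but conditions (ii) and (iii) are assertions about the modules $\mathcal{T}_iM$ and $\lambda\Omega^iM$ themselves, whose depths are not determined by the Ext modules (as your own ``obstacle'' paragraph notes, $(\ast)$ pins down $\depth_R\mathcal{T}_iM$ only up to the unknown reflexive middle term). Nothing in the proposal relates $\depth_R\mathcal{T}_iM$ to $\depth_R\mathcal{T}_i(M/\Gamma_\fm(M))$, and such a comparison is not routine, since the two transposes differ by much more than free summands. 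The failure is total when $M$ has finite length: then $M/\Gamma_\fm(M)=0$, so the reduction yields no information at all, while the statement for $M$ still has real content --- for $M=k$ and $d\ge 3$, $k$ is vacuously SDE, yet one must prove $\depth_R\mathcal{T}_ik\ge d-i$, e.g.\ $\depth_R\Tr k\ge d-1$, which is precisely what the dimension induction was supposed to deliver and cannot reach (no element of $\fm$ is regular on $k$, so the inductive step never applies). As written, your induction proves the theorem only for modules of positive depth; to capture depth zero you need an argument, like the paper's descending induction on $i$, that never cuts by an element regular on $M$.
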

\begin{proof}For a stable finite $R$--module $N$, there is an exact sequence (\cite[section 5]{MS}),
\centerline{$ 0\longrightarrow \Ext^1_R(\Tr N,R)\longrightarrow
N\longrightarrow \lambda ^2N\longrightarrow 0$.}
Note that since the
transpose of every finite $R$--module is either stable or zero,
$\Tr\mathcal{T}_iM$ is stably isomorphic to $\Omega^{i-1}M$ for
$i>0$, and $\Ext^1_R(\Omega^{i-1}M,R)\cong \Ext^i_R(M,R)$, so we
have the exact sequence
\begin{equation}\label{e} 0\longrightarrow \Ext^i_R(M,R)\longrightarrow \mathcal{T}_iM\longrightarrow \lambda^2
\mathcal{T}_iM\longrightarrow 0.
\end{equation} Also, since $\lambda ^2\mathcal{T}_iM$ is stably isomorphic to $\Omega\mathcal{T}_{i+1}M$ and $R$ is
 Cohen-Macaulay, we have \begin{equation}\label{e2} \depth_R(\lambda ^2\mathcal{T}_iM)=\depth_R
 (\Omega\mathcal{T}_{i+1}M).
 \end{equation}

(i)$\Longrightarrow$(ii). We proceed by induction on $i$. From the
exact sequence (\ref{e}) we have $\depth_R(\mathcal{T}_{d-1}M)\geq
1$. Now suppose that $i\leq d-2$ and
$\depth_R(\mathcal{T}_{i+1}M)\geq d-i-1$, accordingly,
$\depth_R(\Omega\mathcal{T}_{i+1}M)\geq d-i$ which in turn implies
$\depth_R(\mathcal{T}_iM)\geq d-i$, using  (\ref{e2}) and (\ref{e}).

(ii)$\Longrightarrow$(i). By (\ref{e2}) and the assumption,
$\depth_R(\lambda ^2\mathcal{T}_iM)=\depth_R
(\Omega\mathcal{T}_{i+1}M)\geq d-i$
 for all $i=1,\ldots ,d-1$. Using (\ref{e}), we get either $\Ext^i_R(M,R)=0$ or $\depth_R(\Ext^i_R(M,R))
 \geq d-i$ for all $i=1,\ldots ,d-1$.\\
(ii)$\Longleftrightarrow$(iii) Note that
$\Omega\mathcal{T}_{i+1}M=\lambda\Omega^iM$ for each $i$. Thus
$\depth_R(\mathcal{T}_iM)\geq d-i$
 for all $i=1,\ldots ,d-1$ if and only if $\depth_R(\lambda\Omega^iM)=\depth_R(\Omega\mathcal{T}_{i+1}M)
 \geq d-i$ for all $i=0,\ldots ,d-2$.
\end{proof}

A shellable simplicial complex is a special kind of Cohen-Macaulay complex
with a simple combinatorial definition. Shellability is a simple but
powerful tool for proving the Cohen-Macaulay property. A simplicial complex $\Delta$ is pure if each facet (= maximal
face) has the same dimension(cf. \cite[Section II]{S} ).

The concept of \emph{Sequentially Cohen-Macaulay} was defined by combinatorial commutative algebraists (\emph{loc. cit. 3.9} ) to answer
a basic question  to find a
"nonpure" generalization of the concept of a Cohen-Macaulay module,
so that the face ring of a shellable (nonpure) simplicial complex has this
property.

This concept was then applied by commutative algebraists to study some algebraic invariants or special algebras come from graphs(c.f. \cite{AH}).
In  following propositions we see the relation between \emph{Sequentially Cohen-Macaulay}, SDE and CME as well as a way to construct a family of modules with these properties.
\begin{defn}
 \emph{Let $(R,\fm)$ be a local Noetherian ring and let $M$ be a finitely generated
$R$--module. A finite filtration
$0=M_0\subset M_1\subset M_2\subset\ldots  \subset M_r=M $ of
submodules of $M$ is called a} Cohen-Macaulay filtration, \emph{if
each quotient $M_i/M_{i-1}$ is Cohen-Macaulay, and
$\dim_R(M_1/M_0)<\dim_R(M_2/M_1)<\ldots <\dim_R(M_r/M_{r-1})$. The
module $M$ is called} Sequentially Cohen-Macaulay \emph{if $M$
admits a Cohen-Macaulay filtration.}
\end{defn}
A basic fact about Sequentially Cohen-Macaulay modules is the following theorem of Herzog and Popescu
\cite[Theorem 2.4]{HP}.

\begin{thm}\label{SQ}
Let $R$ be Cohen-Macaulay local of dimension $d$ with canonical
module $\omega_R$. The following conditions are equivalent.
\begin{itemize}
           \item[(i)] $M$ is Sequentially Cohen-Macaulay.
            \item[(ii)] $\emph\Ext^{d-i}_R(M,\omega _R)$ are either 0
            or Cohen-Macaulay of dimension i for all $i\geq 0$.
          \end{itemize}
\end{thm}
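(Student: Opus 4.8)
The plan is to reduce to the complete case and then read condition (ii) as a statement about the deficiency (Matlis-dual local cohomology) modules. Since completion is faithfully flat, commutes with $\Ext$ against $\omega_R$, and preserves and reflects dimension, depth and the existence of a Cohen--Macaulay filtration, I may assume $R$ is complete; write $(-)^\vee:=\Hom_R(-,\E(R/\fm))$ for Matlis duality and $K^i(M):=\Ext^{d-i}_R(M,\omega_R)$. I will lean on four standard facts: local duality $\H^i_\fm(M)\cong K^i(M)^\vee$; the vanishing range $K^i(M)=0$ for $i>\dim_R M$; the dimension bound $\dim_R K^i(M)\le i$; and canonical biduality for Cohen--Macaulay modules, namely if $C$ is Cohen--Macaulay of dimension $t$ then $K^j(C)=0$ for $j\ne t$, $K^t(C)$ is Cohen--Macaulay of dimension $t$, and $K^t(K^t(C))\cong C$. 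Finally I use that, for $s=\dim_R M$, the natural biduality map $M\to K^s(K^s(M))$ has kernel equal to the largest submodule $U_M$ of $M$ of dimension $<s$.

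For (i)$\Rightarrow$(ii) I argue by induction on the length $r$ of a Cohen--Macaulay filtration $0=M_0\subset\cdots\subset M_r=M$ with $C_j:=M_j/M_{j-1}$ Cohen--Macaulay of dimension $d_j$ and $d_1<\cdots<d_r$. The case $r=1$ is the biduality fact above. For the step I apply $\Ext_R(-,\omega_R)$ to $0\to M_{r-1}\to M\to C_r\to 0$, obtaining for each $i$ the exact piece $K^{i+1}(M_{r-1})\to K^i(C_r)\to K^i(M)\to K^i(M_{r-1})\to K^{i-1}(C_r)$. Because the filtration dimensions are \emph{strictly} increasing, the contribution of $C_r$ (concentrated in degree $i=d_r$) and that of $M_{r-1}$ (living in degrees $\le d_{r-1}<d_r$ by induction) never collide: for $i=d_r$ the $M_{r-1}$-terms vanish for dimension reasons and $K^{d_r}(M)\cong K^{d_r}(C_r)$; for $i\le d_{r-1}$ the $C_r$-terms vanish and $K^i(M)\cong K^i(M_{r-1})$; for every other $i$ both neighbours vanish. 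Hence each $K^i(M)$ is $0$ or Cohen--Macaulay of dimension $i$, which is (ii).

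For (ii)$\Rightarrow$(i) I induct on $s=\dim_R M$ and build the dimension filtration. Put $U:=U_M$ and $L:=M/U$, so $\dim_R U<s$ and $L$ is unmixed of dimension $s$. Feeding $0\to U\to M\to L\to 0$ into $\Ext_R(-,\omega_R)$ and using $K^i(U)=0$ for $i\ge s$ gives $K^s(L)\cong K^s(M)$, which by (ii) is Cohen--Macaulay of dimension $s$. The content of the argument is then to promote this to the two assertions: $L$ is Cohen--Macaulay, and $U$ again satisfies (ii). Granting these, the inductive hypothesis makes $U$ sequentially Cohen--Macaulay, and placing the piece $L$ of dimension $s$ on top of a Cohen--Macaulay filtration of $U$ (whose pieces all have dimension $<s$) produces a Cohen--Macaulay filtration of $M$.

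The main obstacle is exactly this decoupling of the long exact sequence: showing $K^i(L)=0$ for $i<s$ (equivalently $\depth_R L=s$) while simultaneously forcing each $K^i(U)$ to be $0$ or Cohen--Macaulay of dimension $i$. This is where the full force of (ii)---that every nonzero $K^i(M)$ be Cohen--Macaulay of the \emph{maximal} admissible dimension $i$, not merely Cohen--Macaulay---must enter: the two-plane module $R/\bigl((x,y)\cap(z,w)\bigr)$ over $k[[x,y,z,w]]$ is unmixed with $K^s$ Cohen--Macaulay yet fails to be sequentially Cohen--Macaulay, and the failure is detected precisely by $K^1$ having dimension $0$ instead of $1$. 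Concretely I would compare, degree by degree, the deficiency modules of $M$, $U$ and $L$ through the connecting homomorphisms, using $\dim_R K^i(\,\cdot\,)\le i$ together with the $S_2$-property of canonical modules to kill the low-degree deficiency modules of $L$ and to match $K^i(U)$ with $K^i(M)$ in the remaining degrees. This decoupling is the step I expect to demand the most care, and it is essentially Schenzel's analysis of the dimension filtration.
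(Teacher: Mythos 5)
First, a point of reference: the paper itself does not prove Theorem \ref{SQ}; it quotes it from Herzog and Popescu \cite[Theorem 2.4]{HP}, so your attempt must be measured against the standard arguments and against the machinery this paper uses elsewhere. Your direction (i)$\Rightarrow$(ii) is complete and correct: the induction on the length of a Cohen--Macaulay filtration works exactly as you say, because $\Ext^{d-i}_R(C_r,\omega_R)$ is concentrated in $i=d_r$ while, by induction, $\Ext^{d-i}_R(M_{r-1},\omega_R)$ lives only in degrees $i\le d_{r-1}<d_r$, so the long exact sequence decouples. The genuine gap is in (ii)$\Rightarrow$(i), and it is precisely the step you defer: ``$L$ is Cohen--Macaulay and $U$ again satisfies (ii)'' \emph{is} the theorem, and the route you sketch does not close. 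Write $K^i(N)=\Ext^{d-i}_R(N,\omega_R)$, $s=\dim_R(M)$, $u=\dim_R(U)$. Your zero-map observation (the image of $K^i(L)\to K^i(M)$ has dimension $\le i-1$ since $L$ is unmixed, while a nonzero $K^i(M)$ is Cohen--Macaulay of dimension $i$ and hence has no nonzero submodule of smaller dimension) splits the long exact sequence of $0\to U\to M\to L\to 0$ into surjections $K^{j+1}(U)\twoheadrightarrow K^j(L)$ and short exact sequences $0\to K^i(M)\to K^i(U)\to K^{i-1}(L)\to 0$ for $i<s$. This kills $K^j(L)$ in the range $u\le j\le s-1$, where $K^{j+1}(U)=0$ for dimension reasons; but for $j\le u-1$ it only exhibits $K^j(L)$ as a quotient of $K^{j+1}(U)$, a module about which hypothesis (ii) says nothing. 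To kill that quotient one needs unmixedness or $S_2$-type control of $K^{j+1}(U)$, which is essentially the statement that $U$ satisfies (ii) --- the very thing your induction is supposed to deliver. So the plan, as written, is circular; your example $R/\bigl((x,y)\cap(z,w)\bigr)$ correctly shows the danger is real, but it does not resolve it.

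There is a clean repair, and it is already assembled inside this paper: run the double complex the authors use to prove Theorems \ref{A3} and \ref{A4}. Take a projective resolution $P_\bullet$ of $M$, an injective resolution $I^\bullet$ of $\omega_R$, and form $F:=\Hom_R(\Hom_R(P_\bullet,\omega_R),I^\bullet)$. As in the proof of Theorem \ref{A4}, the functorial isomorphism $\Hom_R(\Hom_R(P_i,\omega_R),I^j)\cong P_i\otimes_R\Hom_R(\omega_R,I^j)$, together with $\Hom_R(\omega_R,\omega_R)\cong R$ and $\Ext^i_R(\omega_R,\omega_R)=0$ for $i>0$, makes the horizontal spectral sequence collapse to $M$ concentrated in total degree $0$. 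The vertical spectral sequence has $E_2^{p,q}=\Ext^p_R(\Ext^q_R(M,\omega_R),\omega_R)$; under (ii) the inner module is zero or Cohen--Macaulay of dimension $d-q$, so by \cite[Theorem 3.3.10]{BH} the page is concentrated on the diagonal $p=q$ and degenerates at $E_2$. Comparing abutments, $M$ acquires a filtration $0=\Phi_{d+1}\subseteq\Phi_d\subseteq\cdots\subseteq\Phi_0=M$ with $\Phi_i/\Phi_{i+1}\cong\Ext^i_R(\Ext^i_R(M,\omega_R),\omega_R)$, each zero or Cohen--Macaulay of dimension $d-i$; discarding the zero quotients yields a Cohen--Macaulay filtration with strictly increasing dimensions, i.e.\ (i). Note that this argument works directly over $R$, with no completion; that also removes the unproved claim in your first step that completion \emph{reflects} the existence of a Cohen--Macaulay filtration, a descent statement which is itself usually deduced from the theorem you are proving.
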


 Thus one observes that over a Gorenstein local ring the conditions SDE, CME and Sequentially
 Cohen-Macaulay are equivalent. Hence Theorem \ref{A2} provides  the following computable characterization of
 sequentially Cohen-Macaulay modules.

\begin{cor}\label{CC}
 Let $R$ be Gorenstein local ring of dimension $d\geq 2$ and  $M$ be a  finitely generated
$R$--module. The following conditions are equivalent.
\begin{itemize}
           \item[(i)] $M$ is Sequentially Cohen-Macaulay.
            \item[(ii)] $\emph\depth_R(\lambda{\Omega^i M})\geq d-i
            $ for all $i$, $0\leq i\leq d-2$.
          \end{itemize}
\end{cor}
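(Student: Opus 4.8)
The plan is to reduce everything to Theorem \ref{A2}, after the key observation that condition (ii) of the corollary is \emph{verbatim} condition (iii) of that theorem (same module $\lambda\Omega^i M$, same bound $d-i$, same range $0\le i\le d-2$). Hence by Theorem \ref{A2} condition (ii) is equivalent to $M$ being \emph{SDE}, and the whole corollary will follow once I prove that over a Gorenstein local ring the properties \emph{Sequentially Cohen-Macaulay} and \emph{SDE} coincide. Since $R$ is Gorenstein we have $\omega_R=R$, so the Herzog--Popescu criterion (Theorem \ref{SQ}) reads: $M$ is Sequentially Cohen-Macaulay if and only if $\Ext^i_R(M,R)$ is either $0$ or Cohen-Macaulay of dimension $d-i$ for every $i\ge 0$.

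First I would dispatch the easy implication. If $M$ is Sequentially Cohen-Macaulay then, by the reformulated Theorem \ref{SQ}, each $\Ext^i_R(M,R)$ with $1\le i\le d-1$ is either $0$ or Cohen-Macaulay of dimension $d-i$, so $\depth_R\Ext^i_R(M,R)=d-i\ge d-i$; that is, $M$ is \emph{SDE} (indeed \emph{CME}), which is the trivial inclusion noted before Proposition \ref{B0}. For the converse I would invoke the standard dimension estimate $\dim_R\Ext^i_R(M,R)\le d-i$, valid over any Cohen-Macaulay local ring with canonical module \cite{BH} (here $\omega_R=R$). For $1\le i\le d-1$ the \emph{SDE} hypothesis gives $\depth_R\Ext^i_R(M,R)\ge d-i$ whenever $\Ext^i_R(M,R)\ne 0$, and combined with $\dim_R\Ext^i_R(M,R)\le d-i$ this forces $\Ext^i_R(M,R)$ to be Cohen-Macaulay of dimension exactly $d-i$. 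The top term $i=d$ is automatic, since $\dim_R\Ext^d_R(M,R)\le 0$ makes it either $0$ or of finite length, i.e. Cohen-Macaulay of dimension $0$.

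The one point that is not a mere index translation, and which I expect to be the main obstacle, is the term $i=0$: to apply Theorem \ref{SQ} I must also show that $\Hom_R(M,R)$ is either $0$ or maximal Cohen-Macaulay, a condition that appears in neither the \emph{SDE} nor the \emph{CME} definition. Here I would feed the \emph{SDE} hypothesis back into Theorem \ref{A2}: taking $i=0$ in condition (iii) gives $\depth_R(\lambda M)\ge d$, so $\lambda M$ is maximal Cohen-Macaulay (or zero). Applying the depth lemma to the exact sequence $0\to M^*\to P^*_0\to \lambda M\to 0$ associated to a minimal projective presentation $P_1\to P_0\to M\to 0$, with $P_0^*$ free and $\depth_R\lambda M=d$, yields $\depth_R M^*\ge \min\{d,\ \depth_R\lambda M+1\}=d$, so $M^*=\Hom_R(M,R)$ is $0$ or maximal Cohen-Macaulay. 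Free summands of $M$ are harmless throughout, since they contribute a free summand to $M^*$ and vanish under $\lambda$ and under $\Ext^i_R(-,R)$ for $i>0$.

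With all three ranges settled, Theorem \ref{SQ} shows that \emph{SDE} is equivalent to Sequentially Cohen-Macaulay over the Gorenstein ring $R$. Combining this equivalence with Theorem \ref{A2} (which identifies \emph{SDE} with condition (ii)) gives (i)$\Leftrightarrow$(ii) and completes the proof. The only genuinely nontrivial ingredient is the $i=0$ endpoint, and it is resolved precisely by exploiting Theorem \ref{A2} once more to control $\depth_R(\lambda M)$; everything else is the dimension bound plus the definitions.
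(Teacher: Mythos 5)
Your proof is correct, and it follows the same route the paper intends: the paper gives no separate proof of Corollary \ref{CC}, merely remarking after Theorem \ref{SQ} that ``one observes that over a Gorenstein local ring the conditions SDE, CME and Sequentially Cohen-Macaulay are equivalent'' and then invoking Theorem \ref{A2}. Your middle-range argument (SDE gives $\depth_R\Ext^i_R(M,R)\ge d-i$, which together with the bound $\dim_R\Ext^i_R(M,R)\le d-i$ forces Cohen-Macaulayness of dimension exactly $d-i$) and the automatic top case $i=d$ are precisely what that observation amounts to. The real added value of your write-up is the $i=0$ endpoint: Theorem \ref{SQ} also demands that $\Hom_R(M,R)$ be zero or maximal Cohen-Macaulay, a condition absent from the definitions of SDE and CME and not automatic, since $M^*$ is in general only a second syzygy and so, for $d\ge 3$, need not be maximal Cohen-Macaulay. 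Your resolution --- feeding SDE back into Theorem \ref{A2}(iii) at $i=0$ to get $\depth_R(\lambda M)\ge d$, then applying the depth lemma to $0\to M^*\to P_0^*\to\lambda M\to 0$ --- is exactly the argument needed to justify the paper's unproved ``observation,'' so your proposal is not just faithful to the paper's approach but more complete than it.
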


To prove Proposition \ref{P}, we need to recall the next generalization of the definition of linkage of modules, \cite[Definition 4]{MS}.

\begin{defn}
 \emph{Let $M$ and $N$ be two finitely generated
$R$--modules. The module $M$
 is said to be} linked \emph{to $N$ by an ideal $\fc$ of $R$, if $\fc \subseteq \Ann_R(M) \cap \Ann_R(N)$
 and $M$ and $N$ are horizontally linked as $R/\fc$--modules.}
\end{defn}
The following result shows that, over Gorenstein local ring, each of
the properties Sequentially Cohen-Macaulay, (or equivalently SDE or
CME) is preserved under evenly linkage.

\begin{prop}\label{P}
 Let $R$ be a Gorenstein local ring.
Then the condition sequentially Cohen-Macaulay (or equivalently,
\emph{SDE} or \emph{CME}) is preserved under evenly linkage by
ideals.
\end{prop}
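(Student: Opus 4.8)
The plan is to reduce the statement to a single even linkage $M \rightsquigarrow M_1 \rightsquigarrow N$, in which $M$ is linked to $M_1$ by an ideal $\fc_1$ and $M_1$ is linked to $N$ by an ideal $\fc_2$, and then to track how the relevant cohomological invariants transform under each of the two horizontal linkages. First I would pass to the Gorenstein quotients $\bar R_i = R/\fc_i$: since $\fc_i$ annihilates the modules involved and, in the linkage-by-ideals setting, the quotients $\bar R_i$ are again Gorenstein, and since being sequentially Cohen-Macaulay depends only on the module and not on the ring over which it is viewed, I may replace $R$ by $\bar R_i$ at each step. Over a Gorenstein local ring the three conditions sequentially Cohen-Macaulay, CME and SDE coincide (the observation following Theorem \ref{SQ}), so it is enough to show that the CME property is transported.

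Next I would analyze one horizontal linkage over a Gorenstein local ring $\bar R$ of dimension $n$. Using the Auslander--Bridger presentation $0 \to M^* \to P_0^* \to \lambda M \to 0$ together with the Martsinkovsky--Strooker sequence $0 \to \Ext^1_{\bar R}(\Tr M, \bar R) \to M \to \lambda^2 M \to 0$, one computes the modules $\Ext^i_{\bar R}(\lambda M, \bar R)$ in terms of the deficiency data of $M$ (equivalently, via local duality, in terms of the local cohomology modules $\H^j_\fm(M)$). The effect of a single link is to send the deficiency modules of $M$ to those of its link by a local-duality isomorphism that reverses the relevant degrees; applying this passage twice returns each invariant to its original degree. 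Consequently the CME condition, which asks precisely that every nonvanishing $\Ext^i_{\bar R}(-,\bar R)$ be Cohen-Macaulay of dimension $n-i$, is carried by the composite $M \rightsquigarrow M_1 \rightsquigarrow N$ back to itself, so that $N$ is CME whenever $M$ is. Here I would invoke Theorem \ref{SQ} to translate the CME condition on $M_1$ and $N$ into the sequentially Cohen-Macaulay statement and Theorem \ref{A2} to keep the depth estimates on the syzygy modules $\lambda\Omega^i$ under control.

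The main obstacle is that the two links are performed by different ideals $\fc_1$ and $\fc_2$, hence over two Gorenstein rings $\bar R_1$ and $\bar R_2$ of possibly different dimensions, so the two passages are not literally inverse to one another. To reconcile them I would first note that the middle module $M_1$ is horizontally linked, hence stable and torsionless, over \emph{both} quotients, which forces $M_1 \cong \lambda^2_{\bar R_i} M_1$ and annihilates the correction term $\Ext^1_{\bar R_i}(\Tr M_1, \bar R_i)$ in the Martsinkovsky--Strooker sequence; this is what upgrades each linkage from a duality up to torsion to a genuine duality on the deficiency modules. The remaining point is a change-of-rings comparison showing that the Cohen-Macaulayness and the correct dimension of the deficiency modules are insensitive to which Gorenstein quotient is used, these being intrinsic properties of the modules; combining this with the two passages yields that $N$ has CME, equivalently that $N$ is sequentially Cohen-Macaulay. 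Finally, since even linkage is a symmetric relation, the same argument applied in the reverse direction gives the converse, establishing that the property is preserved under even linkage.
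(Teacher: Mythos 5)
Your opening reductions (treating one even link at a time, passing to the Gorenstein quotients, and using that sequentially Cohen--Macaulay, SDE and CME coincide over Gorenstein rings) agree with the paper, but the engine of your argument --- that a single horizontal link transforms the deficiency modules by a degree-reversing Matlis duality, so that two links compose to the identity --- is not available in the generality you need; in fact it is incompatible with the very modules the proposition is about. A duality of the shape $\H^i_\fm(\lambda M)\cong \D(\H^{n-i}_\fm(M))$ for $0<i<n$ is precisely \cite[Theorem 10]{MS} (and Theorem \ref{A4} of this paper), and both carry a generalized Cohen--Macaulay hypothesis. That hypothesis cannot be dropped: suppose your starting module $M$ is CME over $\bar{R}_1$ of dimension $n$ and some $\Ext^i_{\bar{R}_1}(M,\bar{R}_1)$ with $0<i<n$ is nonzero, hence Cohen--Macaulay of dimension $n-i>0$. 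Local duality gives $\H^{n-i}_\fm(M)\cong \D\bigl(\Ext^i_{\bar{R}_1}(M,\bar{R}_1)\bigr)$, so your claimed isomorphism would identify the artinian module $\H^i_\fm(M_1)$ with the Matlis double dual of $\Ext^i_{\bar{R}_1}(M,\bar{R}_1)$, i.e.\ with a finitely generated module of dimension $n-i>0$; an artinian module cannot be finitely generated of positive dimension. Hence the duality you invoke can only hold when all intermediate Ext modules have finite length, i.e.\ in the generalized Cohen--Macaulay case, whereas sequentially Cohen--Macaulay modules typically have deficiency modules of infinite length. (Torsionlessness of the middle module does not rescue this: it kills $\Ext^1(\Tr M_1,\bar{R}_i)$ and gives $M_1\cong\lambda^2M_1$, but it says nothing about dualities on cohomology.)

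The paper's proof needs no duality on cohomology at all. For degrees above the common grade $g$ of the linking ideals it produces honest isomorphisms through the shared middle module: by \cite[Lemma 11 and Proposition 16]{MS}, $\Ext^{i+g}_R(M_1,R)\cong \Ext^i_{R/\fc_1}(M_1,R/\fc_1)\cong \Ext^i_{R/\fc_2}(M_2,R/\fc_2)\cong \Ext^{i+g}_R(M_2,R)$ for all $i>0$ (here, in the paper's notation, $M_1$ and $M_2$ are the evenly linked pair and $M$ is the middle module). That leaves exactly one degree, $\Ext^g_R(-,R)\cong\Hom_{R/\fc_j}(-,R/\fc_j)$, which no ``apply the duality twice'' scheme can reach and which your proposal never addresses. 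The paper handles it by a depth chase: sequential Cohen--Macaulayness of $M_1$ makes $\Hom_{R/\fc_1}(M_1,R/\fc_1)$ maximal Cohen--Macaulay over $R/\fc_1$; the exact sequence $0\to \Hom_{R/\fc_1}(M_1,R/\fc_1)\to P_1\to \lambda_{R/\fc_1}M_1\to 0$ then yields $\depth_R(M)=\depth_R(\lambda_{R/\fc_1}M_1)\geq d-g-1$; feeding this into the analogous sequence over $R/\fc_2$, whose cokernel $\lambda_{R/\fc_2}M_2$ is again $M$, forces $\Hom_{R/\fc_2}(M_2,R/\fc_2)$ to be maximal Cohen--Macaulay, and Theorem \ref{SQ} concludes. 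If you want to repair your proposal, replace the liaison duality by this isomorphism-plus-depth-count through the middle module.
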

\begin{proof} Set $d:=\dim R$. Let $\fc_1$ and $\fc_2$ be Gorenstein ideals.
Assume that $M_1$, $M$, and $M_2$ are $R$--modules such that $M_1$
is linked to $M$ by $\fc_1$ and $M$ is linked to $M_2$ by $\fc_2$.
For each $i
> 0$, by \cite[Lemma 11 and Proposition 16]{MS}, we have
\[\begin{array}{rl}
\Ext^{i+g}_R(M_1,R)&\cong \Ext^i_{R/\fc_1}(M_1,R/\fc_1)\\
&\cong\Ext^i_{R/\fc_2}(M_2,R/\fc_2)\\
&\cong \Ext^{i+g}_R(M_2,R)
\end{array}\]
 where $g=\h{\fc_1}=\h{\fc_2}=\gr_R{M_1}=\gr_R{M_2}$.

 Suppose that $M_1$ is Sequentially Cohen-Macaulay. By Theorem 2.9,
 $\Ext^{d-i}_R(M_1,R)$ is either zero or Cohen-Macaulay of dimension $i$ for each $i$. Hence
 $\Hom_{R/{\fc_1}}(M_1,{R/{\fc_1}})(\cong\Ext^g_R(M_1,R)) $ is Cohen-Macaulay $R$--module of dimension $d-g$,
 and so it is maximal Cohen-Macaulay $R/{\fc_1}$--module.
  Note that, for $j=1, 2$ there are exact sequences\\
  \centerline{$ 0 \rightarrow \Hom_{R/{\fc_j}}(M_j,{R/{\fc_j}}) \rightarrow P_j \rightarrow
  {\lambda_{R/{\fc_j}}M_j} \rightarrow 0 $} where $P_j$ is a
   projective ${R/{\fc_j}}$--module. Thus we have $\depth_{R/\fc_2}(\lambda_{R/{\fc_2}}M_2)= \depth_R(M)=
  \depth_R(\lambda_{R/{\fc_1}}M_1)\geq d-g-1$.
 Again $\Hom_{R/\fc_2}(M_2,{R/\fc_2})$
 is maximal Cohen-Macaulay $R/{\fc_2}$--module, and so that $\Ext^g_R(M_2,R)$
 is Cohen-Macaulay $R$--module of dimension $d-g$.
 Hence $M_2$ is Sequentially Cohen-Macaulay $R$--module by Theorem \ref{SQ}.
\end{proof}

As mentioned just after Theorem \ref{SQ}, over Gorenstein local
rings, CME modules are exactly sequentially Cohen-Macaulay modules.
On the other hand , when $(R,\fm)$ is Cohen-Macaulay  ring with
canonical module $\omega_R$, it follows from the result
\cite[Theorem 2.5]{F} of Foxby that $\Tor_i^R(M,\omega_R)=0$ for all
$i>0$, whenever $\gd_R M <\infty$.
 Moreover, Khatami and Yassemi in \cite[Theorem 1.11]{KY} prove that whenever $(R,\fm)$ is Cohen-Macaulay  ring with
canonical module $\omega_R$ and $M$ is an $R$-module with finite Gorenstein dimension then
 $M\otimes_R \omega_R$ is Cohen-Macaulay if and
only if $M$ is Cohen-Macaulay.  Note that by Lemma \ref{B0}, if
$\gd_R M <\infty$
 and $M$ is Cohen-Macaulay then $M$ is CME, i.e. the class of CME module contains
  the class of Cohen-Macaulay modules of finite G-dimensions.
 Hence the following question is naturally posed.

 {\it What does happen if in results of Foxby,  Yassemi and Khatami one replace
 finite G-dimension and Cohen-Macaulay conditions of $M$ with the condition
 that $M$ is \emph{CME}?}

 The following Theorem provides  an answer  to this question.

\begin{thm}\label{A3}
 Let $R$ be a Cohen-Macaulay local ring with the canonical
module $\omega_R$, and let $M$ be a finitely generated $R$--module.
Then the following two statements are equivalent.
\begin{itemize}
           \item[(i)] $M$ is \emph{CME}.
            \item[(ii)]{$M\otimes_R\omega_R$ is sequentially Cohen-Macaulay
            and $\emph{Tor}^R_i(M,\omega_R)=0$ for all $i>0$.}
          \end{itemize}
\end{thm}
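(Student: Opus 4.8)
The plan is to route both implications through one derived-category identity and let Theorem \ref{SQ} do the translation. The engine is the standard fact that over a Cohen-Macaulay local ring $R$ with canonical module $\omega_R$ one has $\uhom_R(\omega_R,\omega_R)\simeq R$ (equivalently $\Hom_R(\omega_R,\omega_R)=R$ and $\Ext^i_R(\omega_R,\omega_R)=0$ for $i>0$). Plugging this into derived tensor-Hom adjunction gives
\[
\uhom_R(M\utp_R\omega_R,\omega_R)\simeq\uhom_R(M,\uhom_R(\omega_R,\omega_R))\simeq\uhom_R(M,R),
\]
and I would read off each direction from a hyper-Ext spectral sequence attached to this identity.

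First I would dispatch (ii)$\Rightarrow$(i), the clean direction. If $\Tor_i^R(M,\omega_R)=0$ for all $i>0$, then $M\utp_R\omega_R$ is just $M\otimes_R\omega_R$ placed in degree $0$, so the spectral sequence computing the left-hand side degenerates and yields natural isomorphisms $\Ext^i_R(M\otimes_R\omega_R,\omega_R)\cong\Ext^i_R(M,R)$ for every $i$. Since $M\otimes_R\omega_R$ is sequentially Cohen-Macaulay, Theorem \ref{SQ} makes each $\Ext^{d-j}_R(M\otimes_R\omega_R,\omega_R)$ vanish or be Cohen-Macaulay of dimension $j$; putting $i=d-j$ and transporting through the isomorphism shows that $\Ext^i_R(M,R)$ is zero or Cohen-Macaulay of dimension $d-i$, i.e. $M$ is CME.

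For (i)$\Rightarrow$(ii) the real content is to manufacture the Tor-vanishing, after which the same isomorphism together with Theorem \ref{SQ} hands back the sequential Cohen-Macaulayness. Applying $\uhom_R(-,\omega_R)$ to the identity above and invoking biduality for the dualizing module $\omega_R$, I would aim at the companion identity $M\utp_R\omega_R\simeq\uhom_R(\uhom_R(M,R),\omega_R)$ and exploit its hyper-Ext spectral sequence
\[
E_2^{p,j}=\Ext^p_R\big(\Ext^j_R(M,R),\omega_R\big)\Longrightarrow\Tor_{j-p}^R(M,\omega_R).
\]
Since $M$ is CME, each $\Ext^j_R(M,R)$ with $1\le j\le d-1$ is zero or Cohen-Macaulay of dimension $d-j$, so local duality concentrates $\Ext^p_R(\Ext^j_R(M,R),\omega_R)$ in the single degree $p=j$; those diagonal entries feed only the total degree $p-j=0$, that is only $\Tor_0^R(M,\omega_R)=M\otimes_R\omega_R$, and nothing below the diagonal survives to build the higher Tor-modules. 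Extracting $\Tor_i^R(M,\omega_R)=0$ for $i>0$ this way, the identity then gives $\Ext^i_R(M\otimes_R\omega_R,\omega_R)\cong\Ext^i_R(M,R)$ and Theorem \ref{SQ} turns CME into the sequential Cohen-Macaulayness of $M\otimes_R\omega_R$, closing the loop.

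The hard part will be exactly this extraction of the Tor-vanishing in (i)$\Rightarrow$(ii). The companion identity, hence the spectral sequence that carries the argument, presupposes that $M\utp_R\omega_R$ lies in the bounded derived category of finitely generated modules, i.e. that only finitely many $\Tor_i^R(M,\omega_R)$ are nonzero; this finiteness is not handed to us by the CME hypothesis and must be secured first. One must also control the edge columns that CME does not constrain, namely $j=0$ (the term $\Hom_R(M,R)$) and $j\ge d$, since a priori these are the only places below-diagonal entries could feed nonzero higher Tor-modules. I would try to pin both of these down by an induction on $d$ in the style of Proposition \ref{A1}, cutting by an element $x\in\fm$ regular on $R$, on $M$ and on $\omega_R$ and outside the associated primes of all the $\Ext^i_R(M,R)$, passing to $\overline R=R/xR$ with $\omega_{\overline R}=\omega_R/x\omega_R$; but this forces a separate treatment of the low-depth case $\depth_R M=0$, where no such $x$ exists, and that boundary case is where I expect the genuine difficulty to concentrate.
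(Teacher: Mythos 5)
Your argument for (ii)$\Rightarrow$(i) is correct and is in substance the paper's own proof: both amount to the isomorphisms $\Ext^i_R(M\otimes_R\omega_R,\omega_R)\cong\Ext^i_R(M,R)$, obtained from tensor--Hom adjunction together with $\Hom_R(\omega_R,\omega_R)\cong R$ and $\Ext^{i>0}_R(\omega_R,\omega_R)=0$, followed by Theorem \ref{SQ}. Also, your preliminary worry about boundedness in the other direction is unfounded: since $\omega_R$ has finite injective dimension $d$, one may take $I^\bullet$ of length $d$; the double complex $\Hom_R(\Hom_R(P_\bullet,R),I^\bullet)\cong P_\bullet\otimes_R I^\bullet$ then has only $d+1$ rows, both of its spectral sequences converge for that reason alone, and the identity $M\utp_R\omega_R\simeq\uhom_R(\uhom_R(M,R),\omega_R)$ holds with no finiteness assumption whatsoever on the modules $\Tor^R_i(M,\omega_R)$.

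The genuine gap in (i)$\Rightarrow$(ii) is exactly the one you flagged --- the columns $j=0$ and $j\geq d$, which the CME hypothesis of Definition \ref{D1} does not constrain --- and it cannot be closed, because the implication is false as stated. Let $R$ be any non-Gorenstein Cohen--Macaulay local ring with canonical module (for instance $k[[t^3,t^4,t^5]]$, or an Artinian non-Gorenstein ring) and let $M=k$ be the residue field. Since $\depth R=d$, we have $\Ext^i_R(k,R)=0$ for all $i<d$, so $k$ satisfies the CME condition trivially; yet $\Tor^R_1(k,\omega_R)\neq 0$, because a finitely generated module with vanishing $\Tor_1$ against $k$ is free, and $\omega_R$ is free only when $R$ is Gorenstein. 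In your spectral sequence this failure lives precisely in the columns $j>d$: one has $\Ext^j_R(k,R)\neq 0$ for infinitely many $j>d$ (otherwise $R$ would have finite injective dimension over itself, hence be Gorenstein), and these entries, concentrated at $p=d$ by local duality since $\Ext^j_R(k,R)$ has finite length, are what feed the nonvanishing higher Tor modules. Note that the paper's own proof commits the very oversight you were careful about: it asserts that ${}^{v}\E^{i,j}_2=\Ext^i_R(\Ext^j_R(M,R),\omega_R)$ vanishes for $i\neq j$ ``since $\Ext^j_R(M,R)$ is either zero or Cohen--Macaulay of dimension $d-j$,'' but CME grants this only for $1\leq j\leq d-1$. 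Your proposed induction cannot rescue the argument either; the base case $\depth_R M=0$ that you identified as the hard boundary is exactly where the counterexample sits. Both your proof and the paper's do become correct, essentially verbatim, if CME is strengthened to require $\Ext^j_R(M,R)$ to be zero or Cohen--Macaulay of dimension $d-j$ for \emph{all} $j\geq 0$; equivalently, adding that $\Hom_R(M,R)$ is zero or maximal Cohen--Macaulay, that $\Ext^d_R(M,R)$ has finite length, and that $\Ext^j_R(M,R)=0$ for $j>d$. With that hypothesis the diagonal concentration you wanted is immediate and no induction is needed.
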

\begin{proof} (i)$\Rightarrow$(ii). Let $P_\bullet: \cdots\rightarrow P_1\rightarrow
 P_0\rightarrow 0$ be a projective resolution of $M$,
and let $I^\bullet: 0\rightarrow I^0\rightarrow I^1\rightarrow
\cdots$ be an injective resolution of $\omega_R$ and construct the
third quadrant double complex
$F:=\Hom_R(\Hom_R(P_\bullet,R),I^\bullet).$ Let $^v\E$ (resp.
$^h\E$) denote the vertical (resp. horizontal) spectral sequence
associated to the double complex $F$. Then $^v\E^{i,j}_2
\cong\Ext^i_R(\Ext^j_R(M,R),\omega_R)$. Since $\Ext^i_R(M,R)$ is
either zero or is Cohen-Macaulay of dimension $d-i$, we have
$$^v\E^{i,j}_2\cong \left\lbrace
           \begin{array}{c l}
              \Ext^i_R(\Ext^j_R(M,R),\omega_R)\ \ & \text{ \ \ $i=j$,}\\
              0\ \   & \text{   \ \ $\textrm{otherwise}$.}
           \end{array}
        \right.$$\\
By using the equivalence of functors $\Hom_R(\Hom_R(X,R),Y)$ and
$X\otimes_RY$, when $X$(resp. $Y$) belongs to the subcategory of
projective (respectively injective)$R$--modules, we find that the
double complex $\Hom_R(\Hom_R(P_\bullet,R),I^\bullet)$ is isomorphic
to the third quadrant double complex $P_\bullet\otimes_RI^\bullet$.
Now we may use this double complex to find that
$$^h\E^{i,j}_2\cong \left\lbrace
           \begin{array}{c l}
              \Tor^R_i(M,\omega_R)\ \ & \text{ \ \ $j=0$,}\\
              0\ \   & \text{   \ \ $\textrm{otherwise}$.}
           \end{array}
        \right.$$\\
It follows that $^h\E_{\infty}=\ ^h\E_2$ and $^v\E_{\infty}=\
 ^v\E_2$. By comparing the two spectral sequences $^h\E$ and $^v\E$
we get $\Tor^R_i(M,\omega_R)= 0$ for all $i>0$. Thus there is a
filtration $0=\Phi_{d+1}\subset\Phi_d\subset\ldots
\subset\Phi_0=M\otimes_R\omega_R$ of $M\otimes_R\omega_R$ such that
 $\Ext^i_R(\Ext^i_R(M,R),\omega_R)\cong\Phi_i/\Phi_{i+1}$ for $i=0,\ldots ,d$. Note that, by
 \cite[Theorem 3.3.10]{BH}, $\Ext^i_R(\Ext^i_R(M,R),\omega_R)$
 is either zero or Cohen-Macaulay of dimension $d-i$. In other words $M\otimes_R\omega_R$ is
 sequentially  Cohen-Macaulay.\\

(ii)$\Rightarrow$(i). Consider the third quadrant double complex
$\Hom_R(P_\bullet \otimes_R \omega_R , E^\bullet )$. Using the same
notation as before, let $^vE$ (resp. $^hE$) be the vertical (resp.
horizontal) spectral sequences associated
 to the double complex
$\Hom_R(P_\bullet \otimes_R \omega_R , E^\bullet ).$  Then $^v\E^{i,j}_2\cong\Ext^i_R
(\Tor^R_j(M,\omega_R),\omega_R)\cong0$, for all $j>0$,
by our assumption. By using the equivalence of functors $\Hom_R(X\otimes_R\omega_R,Y)$
and $\Hom_R(X,\Hom_R(\omega_R,Y))$
in the category of $R$--modules,
we find the following isomorphism of double complexes $\Hom_R(P_\bullet\otimes_R\omega_R,E^\bullet)
\cong\Hom_R(P_\bullet,\Hom_R(\omega_R,E^\bullet)).$
Thus, we get $^hE^2_{i,j}\cong\Ext^i_R(M,\Ext^j_R(\omega_R,\omega_R))$, for all $i,j\geq0$.
As $\Ext^i_R(\omega_R,\omega_R)=0$ for $i>0$
and $\Hom_R(\omega_R,\omega_R)\cong R$, we get
$$^h\E^2_{i,j}\cong \left\lbrace
           \begin{array}{c l}
              \Ext^i_R(M,R)\ \ & \text{ \ \ $j=0$,}\\
              0\ \   & \text{   \ \ $\textrm{otherwise}$.}
           \end{array}
        \right.$$\\
As the two spectral sequences $^vE$ and $^hE$ collapse, we have
$^hE^\infty=\ ^hE^2$ and $^vE^\infty=\ ^vE^2$ and so that
$\Ext^i_R(M,R)\cong\Ext^i_R(M\otimes_R\omega_R,\omega_R)$. Since
$M\otimes_R\omega_R$ is sequentially Cohen-Macaulay,
$\Ext^i_R(M,R)=0$ or Cohen-Macaulay of dimension $d-i$
(see\cite[Theorem 1.9]{BH}), i.e. $M$ is $\CME$.
\end{proof}


  \begin{cor}
   Let $(R,\fm)$ be a Cohen-Macaulay local ring and let $M$ be a finitely generated
$R$--module. Set $\omega_{\widehat{R}}$ as the canonical module of
$\widehat{R}$, the completion of $R$ with respect to the $\fm$--adic
topology. Then the following are equivalent.
\begin{itemize}
           \item[(i)]{$M$ is $\emph{CME}$.}
            \item[(ii)]{$\widehat{M}\otimes_{\widehat{R}}\omega_{\widehat{R}}$ is sequentially Cohen-Macaulay and
            $\emph{Tor}^{\widehat{R}}_i(\widehat{M},\omega_{\widehat{R}})=0$ for all $i>0$.}
          \end{itemize}

\end{cor}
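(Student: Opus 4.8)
The plan is to reduce the statement to the complete case and then quote Theorem \ref{A3}. The obstruction that the corollary is designed to circumvent is that $R$ itself need not admit a canonical module, whereas $\widehat{R}$ always does: being a complete Cohen-Macaulay local ring, $\widehat{R}$ is a homomorphic image of a regular local ring (Cohen's structure theorem) and therefore possesses a canonical module $\omega_{\widehat{R}}$; moreover $\widehat{R}$ is again Cohen-Macaulay and $\dim\widehat{R}=\dim R=:d$. Consequently Theorem \ref{A3}, applied to the pair $(\widehat{R},\omega_{\widehat{R}})$ together with the finitely generated $\widehat{R}$-module $\widehat{M}$, immediately delivers the equivalence of condition (ii) with the assertion that $\widehat{M}$ is CME as an $\widehat{R}$-module. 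Thus the entire problem collapses to showing that $M$ is CME over $R$ if and only if $\widehat{M}$ is CME over $\widehat{R}$.

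To establish this insensitivity of the CME condition to completion, I would invoke faithfully flat base change. Since $M$ is finitely generated and $\widehat{R}$ is flat over $R$, there are natural isomorphisms $\Ext^i_{\widehat{R}}(\widehat{M},\widehat{R})\cong\Ext^i_R(M,R)\otimes_R\widehat{R}$ for every $i$, that is, $\Ext^i_{\widehat{R}}(\widehat{M},\widehat{R})$ is just the $\fm$-adic completion of $\Ext^i_R(M,R)$. Because $\widehat{R}$ is faithfully flat over $R$, this gives $\Ext^i_R(M,R)=0$ exactly when $\Ext^i_{\widehat{R}}(\widehat{M},\widehat{R})=0$. Furthermore, for any finitely generated $R$-module $N$ one has $\dim_{\widehat{R}}\widehat{N}=\dim_R N$ and $\depth_{\widehat{R}}\widehat{N}=\depth_R N$, so $\Ext^i_R(M,R)$ is Cohen-Macaulay of dimension $d-i$ precisely when its completion $\Ext^i_{\widehat{R}}(\widehat{M},\widehat{R})$ is Cohen-Macaulay of dimension $d-i$. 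Applying these equivalences for every $i=1,\ldots,d-1$ yields that $M$ is CME over $R$ if and only if $\widehat{M}$ is CME over $\widehat{R}$.

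Chaining the two reductions then completes the argument: $M$ is CME over $R$ if and only if $\widehat{M}$ is CME over $\widehat{R}$, which in turn, by Theorem \ref{A3} for the ring $\widehat{R}$, holds if and only if $\widehat{M}\otimes_{\widehat{R}}\omega_{\widehat{R}}$ is sequentially Cohen-Macaulay and $\Tor^{\widehat{R}}_i(\widehat{M},\omega_{\widehat{R}})=0$ for all $i>0$. I do not anticipate a genuine obstacle here, since all the substance resides in Theorem \ref{A3}; the only points requiring care are the bookkeeping of dimension and depth under completion and the observation that the defining index range $i=1,\ldots,d-1$ is unchanged because $\dim R=\dim\widehat{R}$. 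In effect the corollary is the remark that CME is a property detected after completion, which frees Theorem \ref{A3} from the hypothesis that $R$ carry its own canonical module.
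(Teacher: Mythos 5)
Your proof is correct and is precisely the argument the paper intends: the corollary is stated without proof as an immediate consequence of Theorem \ref{A3}, obtained by passing to $\widehat{R}$ (which always has a canonical module) and using that $\Ext^i_{\widehat{R}}(\widehat{M},\widehat{R})\cong\Ext^i_R(M,R)\otimes_R\widehat{R}$ together with the invariance of dimension and depth under completion, so that $M$ is CME over $R$ if and only if $\widehat{M}$ is CME over $\widehat{R}$. Your write-up simply makes explicit the faithfully flat base-change bookkeeping that the paper leaves to the reader.
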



\section{local cohomology and linkage}
The main purpose of this section is to give a generalization of
\cite[Theorem 10]{MS} which states that $\H^i_\fm(\lambda M)\cong
\D(\H^{d-1}_\fm(M))$ for $i=1,\ldots ,d-1$, whenever $M$ is a
generalized Cohen-Macaulay module over Gorenstein local ring $R$,
where $\D(-)$ is the Matlis duality functor. Here we assume that $R$
is Cohen-Macaulay with canonical  module $\omega_R$ such that
$M\otimes_R\omega_R$ is generalized Cohen-Macaulay; it is then shown
that for each $i=1,\ldots ,d-1$,
$\H^i_{\fm}(M\otimes_R\omega_R)\cong \D(\H^{d-i}_{\fm}(\lambda M))$.
Also whenever $M$ is generalized Cohen-Macaulay, under some
vanishing assumption on Tor-modules of $M$ and $\omega_R$, we  show
that $\H^i_\fm(\lambda M)\cong \Ext^i_R(M,R)$ for $i=1,\ldots ,d-1$
(see Corollary \ref{B2}).

The next proposition will lead  to a "cohomologic criterion" for generalized Cohen-Macaulay modules to be linked  (Corollary \ref{Clink}). This proposition
has its own interest as it shows the exactness of the sequence \ref{mes}.
Although this exact sequence  may be already known, but for the sake of a detailed proof and statement we mention it.
\begin{prop}\label{A}
 Let $(R,\fm)$ be a Cohen-Macaulay local ring of dimension $d$ with
the canonical module $\omega_R$. Assume that $M$ is a finitely
generated $R$--module, $\emph\Ass_R(M)\subseteq \emph\Ass
R\cup\{\fm\}$ and that M satisfies the Serre condition $(S_2)$ on
the punctured spectrum. Set $M^\upsilon =\emph\Hom_R(M, \omega_R)$.
Let $\phi :M\longrightarrow M^{\upsilon\upsilon}$ be the natural
map, $K:=\emph\Ker(\phi)$ and $C:=\emph\coker(\phi)$. The following
statements holds true.
\begin{itemize}
\item[(i)] If $d=0$ then $K=0$.
\item[(ii)] If $d\leq 1$ then $C= 0$.
\item[(iii)] If $d\geq 1$ then $K\cong\Gamma_\fm(M)$.
\item[(iv)] If $d\geq 2$ then $C\cong\emph\H_\fm^1(M)$ and so there is an exact sequence
\begin{equation}\label{mes}
0\longrightarrow \Gamma_\fm(M)\longrightarrow M\longrightarrow
M^{\upsilon\upsilon}
       \longrightarrow \emph\H^1_\fm(M)\longrightarrow 0.
\end{equation}
\end{itemize}
\end{prop}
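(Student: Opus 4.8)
The plan is to peel off the finite–length part of $M$ and reduce the entire statement to the behaviour of the biduality map on the torsion–free quotient $N:=M/\G_\fm(M)$. \emph{First}, if $d=0$ then $R$ is Artinian and $\omega_R=\E_R(R/\fm)$, so $(-)^\upsilon$ is Matlis duality, an exact self–duality on finite length modules; hence $\phi$ is an isomorphism and $K=C=0$, which settles (i) and the case $d=0$ of (ii). For $d\ge1$ I would begin by noting that $\Hom_R(\G_\fm(M),\omega_R)=0$: since $\depth_R\omega_R=d\ge1$ we have $\fm\notin\Ass_R\omega_R$, and $\Ass_R\Hom_R(\G_\fm(M),\omega_R)=\Supp_R\G_\fm(M)\cap\Ass_R\omega_R\subseteq\{\fm\}\cap\Ass_R\omega_R=\emptyset$. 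Applying $\Hom_R(-,\omega_R)$ to $0\to\G_\fm(M)\to M\xrightarrow{\pi}N\to0$ then gives $N^\upsilon\cong M^\upsilon$, hence an isomorphism $\theta:M^{\upsilon\upsilon}\xrightarrow{\cong}N^{\upsilon\upsilon}$; by naturality of biduality $\theta\circ\phi=\phi_N\circ\pi$.

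The heart of the argument is to show that $\phi_N$ is injective with cokernel of finite length. The hypothesis $\Ass_RM\subseteq\Ass R\cup\{\fm\}$ forces $N$ to be unmixed of dimension $d$, all of its associated primes being minimal primes of the equidimensional ring $R$. For $\fp\ne\fm$ I want $N_\fp$ to be $\omega_{R_\fp}$–reflexive: it is $(S_2)$ by hypothesis, and since $R$ is catenary and equidimensional one checks that $N_\fp$ is equidimensional of dimension $\dim R_\fp$; the standard reflexivity criterion over a Cohen--Macaulay local ring with canonical module (cf. \cite{BH}) then makes $\phi_{N,\fp}$ an isomorphism. Thus $\Ker\phi_N$ and $\coker\phi_N$ are supported only at $\fm$, i.e. of finite length. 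As $\depth_R N\ge1$ (because $\G_\fm(N)=0$), the finite–length submodule $\Ker\phi_N$ must vanish, so $\phi_N$ is injective. Combined with $\theta\circ\phi=\phi_N\circ\pi$ this yields $\Ker\phi=\Ker\pi=\G_\fm(M)$, which is (iii), and $\coker\phi\cong\coker\phi_N=:C'$.

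To finish (ii) and (iv) I would analyse $C'$ via $0\to N\xrightarrow{\phi_N}N^{\upsilon\upsilon}\to C'\to0$ and local cohomology. Since $N^{\upsilon\upsilon}=\Hom_R(N^\upsilon,\omega_R)$ is a canonical dual it satisfies $(S_2)$, so $\depth_R N^{\upsilon\upsilon}\ge\min\{2,d\}$. If $d=1$, then $N$ (when nonzero) is maximal Cohen--Macaulay, hence $\omega_R$–reflexive, so $\phi_N$ is already an isomorphism and $C=C'=0$, giving (ii). If $d\ge2$, then $\H^0_\fm(N^{\upsilon\upsilon})=\H^1_\fm(N^{\upsilon\upsilon})=0$ while $C'$ has finite length, so the long exact sequence collapses to $C'\cong\H^1_\fm(N)$. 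Finally $\H^1_\fm(N)\cong\H^1_\fm(M)$, since $0\to\G_\fm(M)\to M\to N\to0$ gives $\H^i_\fm(\G_\fm(M))=0$ for $i\ge1$ and $\H^0_\fm(N)=0$. Splicing $0\to\G_\fm(M)\to M\xrightarrow{\phi}M^{\upsilon\upsilon}$ with the surjection $M^{\upsilon\upsilon}\twoheadrightarrow C\cong\H^1_\fm(M)$ produces the four–term exact sequence \eqref{mes} of (iv).

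The one genuinely non–formal point — and the step I expect to be the main obstacle — is the reflexivity statement on the punctured spectrum in the middle paragraph: one must combine the fact that $(S_2)$ together with equidimensionality of maximal dimension forces $\omega$–reflexivity, with the verification that equidimensionality of $N$ survives localization, which is exactly where the hypothesis $\Ass_RM\subseteq\Ass R\cup\{\fm\}$ (through the catenariness of $R$) is indispensable. That this assumption cannot be dropped is visible already for $M=R/\fp$ with $0<\h\fp<d$, where $M^\upsilon=0$ and $\phi$ is the zero map, so (iii) fails. Everything after this localization step is a routine depth and local–cohomology computation.
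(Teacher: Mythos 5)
Your proof is correct, but it is organized genuinely differently from the paper's. The paper works directly with $K=\Ker(\phi)$ and $C=\coker(\phi)$ and runs a self-contained induction on $d$: the inductive hypothesis is the proposition itself applied to $M_\fp$ over $R_\fp$, which identifies $K_\fp$ and $C_\fp$ with $\Gamma_{\fp R_\fp}(M_\fp)$ and $\H^1_{\fp R_\fp}(M_\fp)$, and these are then killed by the depth estimates coming from $\Ass_R(M)\subseteq \Ass R\cup\{\fm\}$ and $(S_2)$; the $d=1$ case of (ii) is settled by a separate local-duality diagram showing $\Gamma_\fm(C)=0$. You instead split off $\Gamma_\fm(M)$ at the outset, transfer everything to the torsion-free quotient $N=M/\Gamma_\fm(M)$ through the isomorphism $M^{\upsilon\upsilon}\cong N^{\upsilon\upsilon}$, and quote a reflexivity criterion ($(S_2)$ plus equidimensionality of maximal dimension implies $\omega$-reflexivity) on the punctured spectrum. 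Your organization buys two real simplifications: statement (iii) becomes the formal identity $\Ker(\phi)=\Ker(\pi)=\Gamma_\fm(M)$, and the $d=1$ case of (ii) collapses to ``$N$ is maximal Cohen-Macaulay, hence reflexive by \cite[Theorem 3.3.10]{BH}'', avoiding the paper's diagram chase with $M^{\upsilon}\cong M^{\upsilon\upsilon\upsilon}$. What it costs is that the entire inductive content of the paper's proof is hidden inside the cited criterion: that criterion is true and standard (it is due to Aoyama, \emph{Some basic results on canonical modules}, J. Math. Kyoto Univ. 23 (1983), and can be proved by exactly the localization-plus-depth induction the paper performs), but it is \emph{not} in \cite{BH} in the generality you need --- Theorem 3.3.10 there covers only maximal Cohen-Macaulay modules and the Artinian case, which suffices for your $d\le 1$ steps but not for the punctured-spectrum step when $\dim R_\fp\ge 2$. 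So you should either cite Aoyama correctly or prove the criterion by induction on dimension, at which point your argument and the paper's essentially coincide. All your remaining steps --- $\Hom_R(\Gamma_\fm(M),\omega_R)=0$ via $\Ass_R\Hom_R(\Gamma_\fm(M),\omega_R)=\Supp_R\Gamma_\fm(M)\cap\Ass_R\omega_R$, the naturality square $\theta\circ\phi=\phi_N\circ\pi$, the finite-length-versus-depth argument killing $\Ker(\phi_N)$, the depth bound $\depth_R N^{\upsilon\upsilon}\ge\min\{2,d\}$, and the local cohomology collapse giving $\coker(\phi_N)\cong\H^1_\fm(N)\cong\H^1_\fm(M)$ --- are correct, and your closing remark that $M=R/\fp$ with $0<\h\fp<d$ shows the necessity of the hypothesis on $\Ass_R(M)$ is a nice addition not present in the paper.
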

\begin{proof}  If $d= 0$, it is clear by \cite[Theorem
3.3.10]{BH} that $C=0 $ and $K= 0$. Assume that $d\geq 1$. One has
$\depth_R(M^{\upsilon\upsilon})\geq
\Min\{2,\depth_R(\omega_R)\}\geq 1 $ and so
$\Gamma_{\fm}(M^{\upsilon\upsilon})=0$. By applying
$\Gamma_{\fm}(-)$ on the exact sequence
 \begin{equation}\label{c} 0\longrightarrow K\longrightarrow
 M\overset{\phi}{\longrightarrow}
M^{\upsilon\upsilon}\longrightarrow C\longrightarrow
0,\end{equation} it follows that $\Gamma_{\fm}(K)=\Gamma_{\fm}(M)$.
Taking $d=1$, for each $\fp\in \Spec R\setminus \{\fm\}$,
$M_{\fp}\cong (M^{\upsilon\upsilon})_{\fp}\cong
(M_{\fp})^{\upsilon\upsilon}$, which implies that
$\Supp_R(K)\subseteq \{\fm\}$ i.e. $K=\Gamma_\fm(M)$. Hence we get
the exact sequence
\begin{equation}\label{a} 0\longrightarrow M/{\Gamma_{\fm}(M)}\longrightarrow
M^{\upsilon\upsilon}\longrightarrow C\longrightarrow0,\end{equation}
from which, by applying $\Gamma_{\fm}(-)$, we obtain the exact
sequence
\begin{equation}\label{b} 0\longrightarrow \Gamma_\fm(C)\longrightarrow
\H^1_{\fm}(M)\longrightarrow
\H^1_{\fm}(M^{\upsilon\upsilon})\longrightarrow
\H^1_{\fm}(C).\end{equation}
As $\depth_R(M^\upsilon)\geq\min\{2, \depth_R(\omega_R)\}\geq 1$,  $M^\upsilon$ is maximal Cohen-Macaulay.
Therefore the natural
map $M^\upsilon\longrightarrow
M^{\upsilon\upsilon\upsilon}$ is isomorphism. Using
the local duality theorem functorially gives the commutative diagram
$$\begin{CD}
&&&&\\
\ \  &&&&0 @> >>\Gamma_\fm(C)@>>>\H^1_{\fm}(M)@>>> \H^1_{\fm}(M^{\upsilon\upsilon})&  \\
 &&&&&&&&  @VV{\cong}V@VV{\cong}V \\
\ \  &&&&&&&& D(M^\upsilon)@>{\cong}>>D(M^{\upsilon\upsilon\upsilon}),&&\\
\end{CD}$$\\
where $D(-)=\Hom_R(-,E(R/\fm))$. Thus we get $\Gamma_\fm(C)=0$. Note
that if $\fp\in\Spec R\setminus\{\fm\}$, then $\dim R_\fp=0$
 and so $C_\fp=0=K_\fp$ by \cite[Theorem
3.3.10]{BH}. Hence $C=\Gamma_\fm(C)=0$.

 In case $d\geq 2$,
$\depth_R(M^{\upsilon\upsilon})\geq\min\{2,
\depth_R(\omega_R)\}\geq 2$ and (\ref{b}) implies that
\begin{equation}\label{d}\Gamma_\fm(C)\cong\H_\fm^1(M).\end{equation}
 Finally, we prove by induction
on $d\geq 2$ that $K=\Gamma_\fm(M)$ and $C=\H_\fm^1(M)$. Assume that
the statement is settled for rings with
 dimension smaller than $d$. Let $\fp\in\Supp_R(M)\setminus\{\fm\}$. We first show that
 $\fp\not\in\Supp_R(K)\cup\Supp_R(C)$.
  If $\h\fp=0$, the claim holds true as before. Assume that $\h\fp\geq1$. As $\dim R_\fp<d$,
  induction hypothesis for
$R_\fp$ implies that $K_{\fp}=\Gamma_{\fp R_{\fp}}(M_{\fp})$ and
$C_\fp=\H_{\fp R_\fp}^1(M_\fp)$. Since $\fp\not\in \Ass_R(R)$ and so
$\fp\not\in \Ass_R(M)$, we get $\depth_{R_\fp}(M_{\fp})\geq 1$ and
thus $K_{\fp}=0$, i.e. $\fp\not\in\Supp_R(K)$. For the case
$\h\fp=1$, we already have, $C_{\fp}=0$. Assume that $\h\fp\geq2$.
Again from the exact sequence \ref{c} and the fact that
$\depth_R(M^{\upsilon\upsilon})>1$, we get
$K=\Gamma_{\fm}(K)=\Gamma_{\fm}(M)$. As $R$ is Cohen-Macaulay and
$\Ass_R(M)\subseteq\Ass(R)\cup\{\fm\}$,
$\dim_{R_{\fp}}(M_{\fp})=\dim_{R_{\fp}}(R_{\fp})=\h{\fp}\geq 2$ and
so $\depth_{R_{\fp}}(M_{\fp})\geq
\Min\{2,\dim_{R_{\fp}}(M_{\fp})\}=2$ because $M$ satisfies $(S_2)$.
Hence $\H^1_{{\fp}R_{\fp}}(M_{\fp})=0$. Hence $C_{\fp}=0$, i.e.
$\fp\not\in\Supp_R(C)$. In particular, $K=\Gamma_\fm(K)$ and
$C=\Gamma_\fm(C)$. Now $K=\Gamma_\fm(M)$ by (\ref{c}), and
$C=\H_\fm^1(M)$ by (\ref{d}).
\end{proof}

\begin{cor}\label{Clink}
 Let $R$ be a Gorenstein local ring and let $M$ be a generalized Cohen-Macaulay stable $R$--module
with $\emph\dim_R(M)=\emph\dim R$. A necessary and sufficient
condition for $M$ to be horizontally linked is that
$\Gamma_\fm(M)=0$.
\end{cor}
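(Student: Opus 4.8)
The plan is to translate ``horizontally linked'' into the vanishing of a single $\Ext$ term and then to read that term off from Proposition \ref{A}. Since $M$ is stable, the exact sequence recalled at the beginning of the proof of Theorem \ref{A2}, namely $0\longrightarrow \Ext^1_R(\Tr M,R)\longrightarrow M\longrightarrow \lambda^2M\longrightarrow 0$, has a canonical surjection on the right. Because $M$ is finitely generated over a Noetherian ring, a surjection $M\twoheadrightarrow \lambda^2M\cong M$ is necessarily an isomorphism, so $M$ is horizontally linked (i.e. $M\cong\lambda^2M$) if and only if $\Ext^1_R(\Tr M,R)=0$. Thus everything reduces to proving $\Ext^1_R(\Tr M,R)\cong\Gamma_\fm(M)$.

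First I would identify this kernel term with the kernel of a biduality map. The Auslander--Bridger sequence $0\longrightarrow \Ext^1_R(\Tr M,R)\longrightarrow M\overset{\sigma_M}{\longrightarrow} M^{**}\longrightarrow \Ext^2_R(\Tr M,R)\longrightarrow 0$ exhibits $\Ext^1_R(\Tr M,R)$ as $\Ker(\sigma_M)$, where $\sigma_M$ is the evaluation map. As $R$ is Gorenstein, $\omega_R=R$, so $M^\upsilon=M^*$ and $M^{\upsilon\upsilon}=M^{**}$, and the natural map $\phi$ of Proposition \ref{A} is exactly $\sigma_M$; hence $\Ker(\phi)=\Ext^1_R(\Tr M,R)$.

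Next I must check the hypotheses of Proposition \ref{A} for our $M$. A generalized Cohen-Macaulay module has $M_\fp$ Cohen-Macaulay with $\dim_{R_\fp}(M_\fp)=d-\dim R/\fp$ for every $\fp\in\Supp_R(M)\setminus\{\fm\}$; in particular $M$ satisfies $(S_2)$ away from $\fm$. If $\fp\in\Ass_R(M)\setminus\{\fm\}$ then $\depth_{R_\fp}(M_\fp)=0$, so $\dim_{R_\fp}(M_\fp)=0$ and therefore $\dim R/\fp=d$; since $R$ is Cohen-Macaulay such a $\fp$ is a minimal prime, i.e. $\fp\in\Ass R$. Hence $\Ass_R(M)\subseteq\Ass R\cup\{\fm\}$, as required. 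With these hypotheses in force (and $d\geq 1$), Proposition \ref{A}(iii) gives $\Ker(\phi)\cong\Gamma_\fm(M)$, so $\Ext^1_R(\Tr M,R)\cong\Gamma_\fm(M)$; combined with the first paragraph this yields that $M$ is horizontally linked $\iff \Ext^1_R(\Tr M,R)=0 \iff \Gamma_\fm(M)=0$.

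I expect the main obstacle to be the identification in the second paragraph: one must be certain that the ``natural map'' $\phi$ of Proposition \ref{A}, a priori defined through the canonical module, coincides over a Gorenstein ring with the evaluation map $\sigma_M$ whose kernel is computed by the transpose via the Auslander--Bridger sequence, since it is precisely this matching that lets $\Ext^1_R(\Tr M,R)$ be recognized as $\Gamma_\fm(M)$. Everything else -- the reduction of horizontal linkage to $\Ext^1_R(\Tr M,R)=0$ and the routine verification that a generalized Cohen-Macaulay module of maximal dimension meets the associated-prime and $(S_2)$ requirements -- is comparatively mechanical.
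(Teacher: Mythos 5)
Your proof is correct and takes essentially the same route as the paper: both reduce horizontal linkage of the stable module $M$ to injectivity of the biduality map $M\to M^{**}$ and then invoke Proposition \ref{A}(iii) to identify the kernel of that map with $\Gamma_\fm(M)$. The only difference is that you prove from scratch---via the $\lambda^2$-exact sequence together with the surjective-endomorphism argument, the Auslander--Bridger sequence, and the local characterization of generalized Cohen--Macaulay modules---the two facts the paper simply cites, namely the linkage criterion from \cite{MS} and the hypothesis verification from \cite[Exercise 9.5.6]{BS}.
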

\begin{proof} Note that, by \cite[Exescise 9.5.6]{BS}  , $\Ass_R(M)\subseteq \Ass_R(R)\cup \{\fm\}$ and $M$
satisfies $(S_2)$ on the punctured spectrum. As $M$ is linked  if
and only if the natural map $M\rightarrow M^{**}$ is one to one, the
result follows by Proposition \ref{A}.
\end{proof}

In the following result, we extend \cite[Theorm 10 in section
10]{MS} for Cohen-Macaulay rings with canonical module.

\begin{thm}\label{A4}
Let $(R,\fm)$ be local Cohen-Macaulay ring of dimension $d\geq 2$ with
canonical module $\omega_R$. Let $M$ be a finitely
generated $R$--module of dimension $d$, such that
$M\otimes_R\omega_R$ is generalized Cohen-Macaulay. Then for each
$i=1,\ldots ,d-1$, $\emph{H}^i_{\fm}(M\otimes_R\omega_R)\cong
\emph{Hom}_R(\emph{H}^{d-i}_{\fm}(\lambda M),E(R/{\fm}))$.
\end{thm}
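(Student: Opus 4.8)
\emph{Plan.} The idea is to push both sides into $\Ext$-modules over $\omega_R$ by local duality, and then read the isomorphism off a single biduality spectral sequence whose collapse is forced by the generalized Cohen--Macaulay hypothesis. First I would assume $R$ complete: this affects neither the local cohomology modules in the statement nor the modules $\lambda M$ and $M\otimes_R\omega_R$, but it turns $\D(-)=\Hom_R(-,E(R/\fm))$ and local duality into honest anti-equivalences. Over the complete ring local duality reads $\D(\H^j_\fm(X))\cong\Ext^{d-j}_R(X,\omega_R)$ for every finite $X$; applied to $X=\lambda M$ and $j=d-i$ it rewrites the claim as
\[
\H^i_\fm(M\otimes_R\omega_R)\cong\Ext^i_R(\lambda M,\omega_R),\qquad 1\le i\le d-1 .
\]

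Writing $Y:=M\otimes_R\omega_R$ and $(-)^\upsilon:=\Hom_R(-,\omega_R)$, I would use the exact sequence $0\to M^*\to P_0^*\to\lambda M\to 0$ from the proof of Theorem \ref{A2}. Since $P_0^*$ is free, $\Ext^i_R(P_0^*,\omega_R)=0$ for $i>0$, so its $\Ext_R(-,\omega_R)$ long exact sequence gives $\Ext^i_R(\lambda M,\omega_R)\cong\Ext^{i-1}_R(M^*,\omega_R)$ for $i\ge2$, and $\Ext^1_R(\lambda M,\omega_R)\cong\coker(P_0\otimes_R\omega_R\to\Hom_R(M^*,\omega_R))$. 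The adjunction $\Hom_R(M\otimes_R\omega_R,\omega_R)\cong\Hom_R(M,\Hom_R(\omega_R,\omega_R))\cong\Hom_R(M,R)$ identifies $M^*\cong Y^\upsilon$. Thus it remains to prove $\H^i_\fm(Y)\cong\Ext^{i-1}_R(Y^\upsilon,\omega_R)$ for $2\le i\le d-1$, together with the degree-one case.

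The heart is the biduality spectral sequence. As $\omega_R$ is a canonical (dualizing) module, $R\Hom_R(R\Hom_R(Y,\omega_R),\omega_R)\simeq Y$, so the hyper-$\Ext$ spectral sequence with $E_2^{p,q}=\Ext^p_R(\Ext^q_R(Y,\omega_R),\omega_R)$, differentials $d_r\colon E_r^{p,q}\to E_r^{p+r,q+r-1}$, and term $E_\infty^{p,q}$ contributing in total degree $p-q$, abuts to $Y$ placed in total degree $0$. Because $\dim Y=\dim M=d$ and $Y$ is generalized Cohen--Macaulay, $\Ext^q_R(Y,\omega_R)\cong\D(\H^{d-q}_\fm(Y))$ has finite length for every $q\ge1$; hence $E_2^{p,q}=0$ for $q\ge1$ and $p\ne d$, while $E_2^{d,q}\cong\H^{d-q}_\fm(Y)$ for $q\ge1$, and the bottom row is $E_2^{p,0}=\Ext^p_R(Y^\upsilon,\omega_R)$. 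In this two-strip pattern the entry $E_2^{i-1,0}=\Ext^{i-1}_R(Y^\upsilon,\omega_R)$ receives no differential, and its only possibly nonzero outgoing differential is $d_{d-i+1}\colon E^{i-1,0}\to E^{d,d-i}\cong\H^i_\fm(Y)$; since the abutment vanishes in the nonzero total degrees $i-1$ and $i$, this $d_{d-i+1}$ is forced to be an isomorphism, yielding $\Ext^{i-1}_R(Y^\upsilon,\omega_R)\cong\H^i_\fm(Y)$ precisely for $2\le i\le d-1$ (the range is empty, as it should be, when $d=2$).

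For $i=1$ I would invoke the four-term sequence (\ref{mes}) of Proposition \ref{A} for $Y$, whose hypotheses $\Ass_R(Y)\subseteq\Ass R\cup\{\fm\}$ and $(S_2)$ on the punctured spectrum follow from $Y$ being generalized Cohen--Macaulay of dimension $\dim R$ (cf. the proof of Corollary \ref{Clink}): it gives $\H^1_\fm(Y)\cong\coker(Y\to Y^{\upsilon\upsilon})$, which matches the degree-one cokernel above once one checks, by naturality of the evaluation map $N\otimes_R\omega_R\to\Hom_R(\Hom_R(N,R),\omega_R)$, that $P_0\otimes_R\omega_R\to Y^{\upsilon\upsilon}$ factors through the biduality map $Y\to Y^{\upsilon\upsilon}$. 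Assembling the three steps with local duality then gives $\H^i_\fm(M\otimes_R\omega_R)\cong\Ext^i_R(\lambda M,\omega_R)\cong\D(\H^{d-i}_\fm(\lambda M))$ for all $1\le i\le d-1$, and specializing $\omega_R=R$ recovers \cite[Theorem 10]{MS}. I expect the real difficulty to be the spectral-sequence bookkeeping: confirming the two-strip collapse, that the single surviving differential is exactly the asserted connecting isomorphism with correct behaviour at the ends $i=2$ and $i=d-1$, and that generalized Cohen--Macaulayness of $Y$ simultaneously supplies the finiteness of the $\Ext^q_R(Y,\omega_R)$ and the hypotheses of Proposition \ref{A}.
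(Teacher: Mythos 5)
Your proposal is correct and follows essentially the same route as the paper's proof: the same duality spectral sequence $\Ext^p_R(\Ext^q_R(Y,\omega_R),\omega_R)\Rightarrow Y$ (for $Y=M\otimes_R\omega_R$) collapsing to a two-strip pattern because the modules $\Ext^q_R(Y,\omega_R)$, $q\geq 1$, have finite length, the same identification $\Hom_R(M\otimes_R\omega_R,\omega_R)\cong M^*$ together with the exact sequence $0\to M^*\to P_0^*\to\lambda M\to 0$, and the same appeal to Proposition \ref{A} for the boundary case $i=1$. The only differences are organizational: you justify the abutment by quoting derived biduality over the canonical module and pass to the completion so as to rewrite the claim as $\H^i_\fm(M\otimes_R\omega_R)\cong\Ext^i_R(\lambda M,\omega_R)$ at the outset, whereas the paper obtains the abutment by computing the horizontal spectral sequence of the explicit double complex and applies local duality only at the end, to modules it has already shown to be of finite length.
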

\begin{proof} First we examine the general situation for an $R$--module $N$ which is a generalized
 Cohen-Macaulay of dimension
$d$. Let $0\rightarrow I^0\rightarrow I^1\rightarrow \cdots$ be an injective resolution
of $\omega_R$ and
 $\cdots\rightarrow P_1\rightarrow P_0\rightarrow 0$ be a projective resolution of $N$
  and construct the third quadrant double complex
  $F:=\Hom_R(\Hom_R(P_{\bullet},\omega_R),I^{\bullet})$. Let $^v\E$ (resp. $^h\E$)
  denote the vertical (resp. horizontal) spectral
  sequence associated to the double complex $F$. Then $^v\E^{i,j}_2
\cong\Ext^i_R(\Ext^j_R(N,\omega_R),\omega_R)$. As $N$ is generalized
Cohen-Macaulay, by local duality theorem, $\Ext^i_R(N,\omega_R)$ is
of finite length, for all $i=1,\ldots ,d$. Therefore
$$^v\E^{i,j}_2\cong \left\lbrace
           \begin{array}{c l}
              \Ext^i_R(N^\upsilon ,\omega_R) \ \   & \text{if  \ \ $j=0$,} \\
              \H^{d-j}_{\fm}(N) \ \ & \text{if  \ \ $j\neq 0 ,i= d$,}\\
              0\ \ & \text{if \ \ $j\neq 0\ ,i\neq d$.}
           \end{array}
        \right.$$\\
As the map $d^r$ is of bidegree $(r,1-r)$, one can observe that
$^v\E^{r,0}_r\cong \Ext^{d-r}_R(N^\upsilon,\omega_R)$, for
$r\geq 2$. Thus we have the following diagram:
$$
 \xymatrix{
 \Ext^0_R(N^\upsilon,\omega_R)\ar@{..>}^{d^d}[rrrrddd]& \cdots &\Ext^{d-2}_R(N^\upsilon,\omega_R)\ar^{d^2}[rrd] &\Ext^{d-1}_R(N^\upsilon,\omega_R) &\Ext^{d}_R(N^\upsilon,\omega_R) \\
 0                        &         &  &0         & \H^{d-1}_{\fm}(N)\\
 \vdots                   &         &  &          & \vdots\\
 0                        & \cdots  &  &0         & \H^{1}_{\fm}(N)\\
 0                        & \cdots  &  &0         & \H^{0}_{\fm}(N)\\
  }
 $$

To compute $^h\E_2$, we change our double complex with the functorial isomorphisms
$\Hom_R(\Hom_R(P_i,\omega_R),I^j)\cong P_i\otimes_R\Hom_R(\omega_R,I^j).$ Thus we get
      $$^h\E_2\cong\left\lbrace
           \begin{array}{c l}
              N \ \   & \text{if  \ \ $i=0, j=0$,} \\
              0\ \ & \text{otherwise}.
           \end{array}
        \right.$$\\
As $\Ker d^r$ and $\coker d^r$ are isomorphic to $^v\E_\infty$,
comparing the two spectral sequences, one get isomorphisms
$d^r:\Ext^{d-r}_R(N^\upsilon,\omega_R)\longrightarrow
\H^{d-r+1}_\fm(N)$ for $r=2,\ldots ,d-1$. Therefore
$\Ext^{d-r}_R(N^\upsilon,\omega_R)$ is of finite length and so,
by local duality theorem,
$\Ext^{d-r}_R(N^\upsilon,\omega_R)\cong
\mathrm{D}(\H^r_\fm(N^\upsilon)).$ Hence one obtains the
isomorphisms $\H^{d-r+1}_\fm(N)\cong
\mathrm{D}(\H^r_\fm(N^\upsilon))$, for all $r=2,\ldots ,d-1$.

Replacing $N$ by $M\otimes_R\omega_R$, gives
$$\H^{d-i+1}_\fm(M\otimes_R\omega_R)\cong \mathrm{D} \big( \H^i_\fm(
\Hom_R( M\otimes_R\omega_R,\omega_R) ) \big) \cong \mathrm{D}(
\H^i_\fm(M^*)),$$ for all $i=2,\ldots ,d-1.$ Consider the exact
sequence $0\rightarrow M^* \rightarrow P_0^* \rightarrow \lambda M
\rightarrow 0.$ Applying $\Gamma_\fm(-)$ we get
$\H^{i+1}_\fm(M^*)\cong \H^i_\fm(\lambda M)$ for $i=0,\ldots ,d-2$.
Therefore we have isomorphisms $\H^i_\fm(M\otimes_R\omega_R)\cong
\mathrm{D}(\H^{d-i}_\fm(\lambda M))$, for $i=2,\ldots ,d-1$. Now it
remains to prove the claim for $i=1$. Applying Theorem \ref{A} to
$M\otimes_R\omega_R$ and applying the functor $\Hom_R(-,\omega_R)$
on the exact sequence $0\rightarrow M^* \rightarrow P_0^*
\rightarrow \lambda M \rightarrow 0,$ we get the following
commutative diagram with exact rows and columns.
$$\begin{CD}
&&&&&&&&\\
\ \ &&&& P_0\otimes_R\omega_R @>>>M\otimes_R\omega_R @>>>0&  \\
&&&& @V{\cong}VV  @VVV \\
\ \  &&&& \Hom_R(P_0^*,\omega_R) @>>> {(M\otimes_R\omega_R)}^{\upsilon\upsilon}
 @>>> \Ext^1_R(\lambda M,\omega_R)@>>>0&\\
&&&&&&@VVV\\
\ \ &&&&&& \H^1_{\fm}(M\otimes_R\omega_R)&\\
&&&&&&@VVV \\
&&&&&&0 \\
\end{CD}$$\\
which implies that $\H^1_\fm(M\otimes_R\omega_R)\cong \Ext^1_R(\lambda M,\omega_R)\cong
\mathrm{D}(\H^{d-1}_\fm(\lambda M) ).$
\end{proof}
As the final result, we state the next corollary of Theorem \ref{A4}.
\begin{cor}\label{B2}
 Let $R$ be a Cohen-Macaulay ring of dimension $d\geq2$ with canonical
module $\omega_R$, and let $M$ be a finitely
 generated $R$--module. Suppose that $\emph{Ext}^i_R(M,R)$ is of finite length for $i=0,\ldots
 ,d-1$ and $\emph{Tor}^R_i(M,\omega_R)=0$
  for $i>0$. Then $\emph{H}^i_{\fm}(\lambda M)\cong \emph{Ext}^i_R(M,R)$, $i=1,\ldots ,d-1$,
  and so $\lambda M$ is generalized Cohen-Macaulay.
\end{cor}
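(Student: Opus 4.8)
The plan is to read this off Theorem~\ref{A4} via the canonical module, using the Tor--vanishing to trade $\Ext^i_R(M,R)$ for $\Ext^i_R(M\otimes_R\omega_R,\omega_R)$. First I would record the bridge isomorphism. Since $\Tor_i^R(M,\omega_R)=0$ for all $i>0$, the spectral sequence comparison carried out in the proof of Theorem~\ref{A3} (equivalently the adjunction $\Hom_R(M\otimes_R\omega_R,\omega_R)\cong\Hom_R(M,\Hom_R(\omega_R,\omega_R))\cong\Hom_R(M,R)$ together with $\Ext^i_R(\omega_R,\omega_R)=0$ for $i>0$) yields
\[
\Ext^i_R(M,R)\cong\Ext^i_R(M\otimes_R\omega_R,\omega_R)\qquad\text{for all }i\ge 0.
\]
Write $N:=M\otimes_R\omega_R$.

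The second step is to put $N$ into the range of Theorem~\ref{A4}. By local duality, $\H^{d-i}_\fm(N)\cong\D(\Ext^i_R(N,\omega_R))\cong\D(\Ext^i_R(M,R))$, so the hypothesis that $\Ext^i_R(M,R)$ has finite length (in particular for $1\le i\le d-1$) forces $\H^j_\fm(N)$ to have finite length for $1\le j\le d-1$; as $\H^0_\fm(N)=\Gamma_\fm(N)$ is automatically of finite length, $N$ is generalized Cohen--Macaulay of dimension $d$. Hence Theorem~\ref{A4} applies and gives
\[
\H^i_\fm(N)\cong\D\big(\H^{d-i}_\fm(\lambda M)\big),\qquad i=1,\ldots,d-1.
\]

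Finally I would compare the two descriptions of $\H^i_\fm(N)$. Juxtaposing the last display with the local--duality computation $\H^i_\fm(N)\cong\D(\Ext^{d-i}_R(M,R))$ gives $\D(\H^{d-i}_\fm(\lambda M))\cong\D(\Ext^{d-i}_R(M,R))$ for $1\le i\le d-1$. Since $\Ext^{d-i}_R(M,R)$ has finite length, so does its Matlis dual, whence $\H^{d-i}_\fm(\lambda M)$ has finite length too; applying $\D$ once more and using that $\D$ is an involution on finite length modules yields $\H^{d-i}_\fm(\lambda M)\cong\Ext^{d-i}_R(M,R)$. Reindexing by $i\mapsto d-i$ gives the asserted isomorphisms $\H^i_\fm(\lambda M)\cong\Ext^i_R(M,R)$ for $i=1,\ldots,d-1$. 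Each is of finite length and, $\lambda M$ being a nonzero syzygy module of dimension $d$, this says precisely that $\lambda M$ is generalized Cohen--Macaulay.

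The step I expect to demand the most care is the verification that $N=M\otimes_R\omega_R$ is generalized Cohen--Macaulay, where the Tor--vanishing, the finite length of the $\Ext^i_R(M,R)$, and local duality must be combined with the correct indexing; note in particular that $\H^d_\fm(N)$ (dually $\Hom_R(N,\omega_R)\cong M^*$) is not controlled and need not be of finite length, so only the range $1\le j\le d-1$ is relevant. Once finite length has been established the remaining Matlis--duality bookkeeping is routine, and no completeness hypothesis is needed to invoke $\D\D\cong\mathrm{id}$ there.
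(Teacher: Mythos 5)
Your proposal is correct and follows essentially the same route as the paper's proof: the bridge isomorphism $\Ext^i_R(M,R)\cong\Ext^i_R(M\otimes_R\omega_R,\omega_R)$ obtained from the spectral sequence argument of Theorem \ref{A3} ((ii)$\Rightarrow$(i)), local duality to conclude that $M\otimes_R\omega_R$ is generalized Cohen-Macaulay, and then Theorem \ref{A4} combined with Matlis duality on finite-length modules, so you are merely making explicit the bookkeeping that the paper compresses into its final sentence. The one caveat, which your write-up shares with the paper's own proof, is that invoking Theorem \ref{A4} also requires $\dim M=d$ (so that $\dim(M\otimes_R\omega_R)=d$), a point that neither argument actually deduces from the stated hypotheses.
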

\begin{proof} Since $\Tor^R_i(M,\omega_R)=0$ for all $i>0$, as in the proof of Theorem
\ref{A3} ((ii)$\Rightarrow$(i)),
$\Ext^i_R(M\otimes_R\omega_R,\omega_R)\cong \Ext^i_R(M,R)$ for
$i=0,\ldots ,d-1$. Hence $\Ext^i_R(M\otimes_R\omega_R,\omega_R)$ is
of finite length for $i=1,\ldots ,d-1$ and so that
$M\otimes_R\omega_R$ is generalized Cohen-Macaulay. Therefore the
result follows from local duality theorem and Theorem \ref{A4}.
\end{proof}

\bibliographystyle{amsplain}

\end{document}